\global\long\def\iprod#1#2{\left\langle #1,\,#2\right\rangle }
\numberwithin{equation}{section}
\theoremstyle{plain}
\newtheorem{thm}{Theorem}[section]
\newtheorem{lem}[thm]{Lemma}
\newtheorem{prop}[thm]{Proposition}
\theoremstyle{definition}
\newtheorem{rmk}{Remark}[section]
\renewcommand{\det}{\mathrm{det}}
\newcommand{\R}{\mathbb R}
\renewcommand{\S}{\mathbb S}
\newcommand{\as}{\mathrm{as}}
\begin{document}

\title{Constrained convex bodies with extremal affine surface areas
\footnote{Keywords: Affine surface areas, John and L\"owner ellipsoid, thin shell estimate,  2010 Mathematics Subject Classification: 52A20, 52A23, 52A40}}

\date{}

\author{O. Giladi, H. Huang, C. Sch\"utt  and E. M. Werner
		\thanks{Partially supported by  NSF grant DMS-1811146}}

\maketitle

\begin{abstract}
Given a convex body $K\subseteq \R^n$ and $p \in \mathbb{R}$,  we introduce and study the extremal inner and outer affine surface areas
\[ IS_p(K) = \sup_{K'\subseteq K}\big(\as_p(K')\big) \  \text{ and } \   os_p(K)=\inf_{K'\supseteq K}\big(\as_p(K')\big),  \]
where $\as_p(K')$ denotes the $L_p$-affine surface area of $K'$, and the supremum is taken over all convex  subsets of $K$
and the infimum over all convex compact subsets containing $K$. 
\newline
The convex body that realizes $IS_1(K)$ in dimension $2$ was determined in \cite{Barany1997} where it was also shown that
this body is the limit shape of lattice polytopes in $K$.  In higher dimensions no results are known about the extremal bodies.
\par
\noindent
We use a thin shell estimate of \cite{GuedonMilman} and  the L\"owner ellipsoid  to give asymptotic estimates on the size of  $IS_p(K)$
and $os_p(K)$.
Surprisingly, it turns 
out that both quantities are   proportional to a power of volume.
\end{abstract}

\section{Introduction}
\noindent
F. John proved in \cite{John} that  among all ellipsoids contained in  a convex body \( K\in \R^n\), there is a unique ellipsoid of maximal volume, now 
called  the John ellipsoid of \(K\).  Dual to the John ellipsoid is the L\"owner ellipsoid, the ellipsoid of minimal volume containing $K$. These ellipsoids  play  fundamental roles in asymptotic convex geometry.  They are  related to the isotropic
position,  to the study of volume concentration, volume ratio, reverse isoperimetric inequalities, Banach-Mazur distance of normed spaces, and many more, including the 
hyperplane conjecture, one of the major open problems in asymptotic geometric analysis.
We refer to e.g., the books \cite{ArtsteinGiannopoulosMilmanBook, BGVV14} for the details and more information.
\par
\noindent
In this paper, we introduce the analogue to John's theorem, when volume is replaced by affine surface area. 
In parallel to John's maximal volume ellipsoid, respectively   the minimal volume L\"owner ellipsoid, we investigate  these convex bodies  contained in \( K\), respectively containing $K$,  that have the largest, respectively smallest,  $L_p$-affine surface areas, 
\begin{equation}\label{k0}
IS_p(K) = \sup_{K'\subseteq K}\big(\as_p(K')\big)  \  \text{ and } \   os_p(K)=\inf_{K'\supseteq K}\big(\as_p(K')\big).  
\end{equation}
By compactness and continuity, the supremum and infimum are  in fact a maximum and minimum, i.e., $IS_p(K) = as_p(K_0)$ for some convex body $K_0 \subset K$
and $os_p(K) = as_p(K_1)$ for some convex body $K_1 \supset K$.
\par
\noindent
 For $p>1$, 
the $L_p$-affine surface area was introduced by 
E. Lutwak   in his ground breaking paper  \cite{Lutwak96} in the context of the $L_p$-Brunn-Minkowski theory 
and in \cite{SchuettWerner2004} for all other $p$, (see also \cite{Hug, MeyerWerner2000}). $L_1$-affine surface area is classical and goes back to W. Blaschke \cite{Blaschke}.
The definition of $L_p$-affine surface area is given below  in (\ref{def:paffine}), where we also list some of its properties.
Due to its remarkable properties, this notion is  important in many areas of mathematics and applications.
 We only quote characterizations 
of $L_p$-affine surface areas by  M. Ludwig and M. Reitzner \cite{LudwigReitzner2010}, 
the $L_p$-affine isoperimetric inequalities, proved by 
E. Lutwak \cite{Lutwak96} for $p>1$ and for all other $p$  in \cite{WernerYe2008}. The classical case $p=1$ goes back to W. Blaschke \cite{Blaschke}. 
These inequalities  are related to various other inequalities, see e.g., 
E. Lutwak, D. Yang and G. Zhang \cite{Lutwak2000, Lutwak2002}.
In particular, the affine isoperimetric inequality implies the Blaschke-Santal\'o inequality and it proved to be
the key ingredient in the solution of many problems, see e.g.\ the books by R. Gardner \cite{GardnerBook} and R. Schneider \cite{SchneiderBook} and also
\cite{IvakiStancu2013,  Ludwig2010, LutwakOliker1995, SchusterWannerer2012, Stancu2002, TW2, WernerYe2008}.
Recent developments include extensions to an Orlicz theory, e.g.,  \cite{GardnerHugWeil2014, HuangLutwakYangZhang, Ludwig2010,  Ye2015}, to a functional setting  \cite{CFGLSW, CaglarWerner2014} and to the spherical and 
hyperbolic setting \cite{BesauWerner2015, BesauWerner2016}.
\newline
Applications of affine surface areas have been manifold. For instance,
affine surface area appears in best and random approximation of convex bodies by polytopes, see, e.g.,  K. B\"or\"oczky
 \cite{Boeroetzky2000, Boeroetzky2000a}, P. Gruber \cite{Gruber1983, GruberHandbook}, M. Ludwig \cite{Ludwig1999},  M. Reitzner \cite{Reitzner2002, ReitznerSurvey} and also
 \cite{GroteWerner, GroteThaeleWerner, HoehnerSchuettWerner,  Schuett1991, SchuettWerner2003} 
and has connections to, e.g., concentration of volume,  \cite{FleuryGuedonPaouris, Ludwig2010, Lutwak2002}, differential equations \cite{BoeroetzkyLutwakYangZhang, HaberlSchuster2009, HuangLutwakYangZhang, TW2, TrudingerWang2008, Zhao2016}, and 
information theory, e.g.,  \cite{AKSW2012, CaglarWerner2014, LutwakYangZhang2002, LutwakYangZhang2004,
PaourisWerner2012, Werner2012}.
\par
\noindent
In dimension $2$ and for $p=1$, $
IS_1(K)$ was determined exactly  by I. B\'ar\'any  \cite{Barany1997}. Moreover,  he showed in \cite{Barany1997} that the extremal body $K_0$ of (\ref{k0}) is unique and that 
$K_0$ is the limit shape of lattice polygons contained in $K$.  
\par
\noindent
In higher dimensions and for $p\neq 1$, there are no results  available  on $IS_p(K)$, $os_p(K)$ and related notions $OS_p(K)$ and $is_p(K)$, defined in (\ref{def sup}) and (\ref{def inf}) below. We observe  that only certain $p$-ranges are meaningful for the various notions. 
\par
\noindent
We use a thin shell estimate by  Gu\'edon and E. Milman \cite{GuedonMilman}, see also G. Paouris \cite{Paouris} and Y. T. Lee and S. S. Vempala \cite{LeeVempala17}, on concentration 
of volume to 
show in our main theorem that 
 $IS_p(K)$ is proportional to a power of the volume $|K|$ of $K$ for fixed $p$, 
up to a constant depending only on $n$.
It involves the Euclidean unit ball $B^n_2$ centered at $0$, and the isotropic constant 
$L_K^2$ of $K$, defined by 
\begin{equation} \label{LK}
n L_K^2= \min \left\{\frac{1}{|TK|^{1 + \frac{2}{n}}} \int_{a+TK} \|x \| ^2 dx : a \in \mathbb{R}^n, T \in GL(n) \right\}.
\end{equation}
\par
\noindent
{\bf Theorem 3.4.}  
{\em  There is a  constant $C>0$ such that for all $n \in \mathbb{N}$, all $0 \leq p \leq n$ and all convex bodies 
$K\subseteq \R^n$,  
\begin{align*}
\frac{1}{n^{5/6}}  \  \left(\frac{C}{L_K}\right)^{\frac{2np}{n+p}}  \ 
 \frac{IS_p(B^n_2)}{ |B^n_2|^\frac{n-p}{n+p}}\leq \  \frac{IS_p(K)}{ |K|^\frac{n-p}{n+p}} \leq  \   \frac{IS_p(B^n_2)}{ |B^n_2|^\frac{n-p}{n+p}}.
\end{align*}
Equality holds trivially in the right inequality if $p=0, n$. 
If $p \neq 0, n$, equality holds in the right inequality iff $K$ is a centered ellipsoid.}
\vskip 2mm
\noindent
Since 
$
\frac{IS_p(B^n_2)}{ |B^n_2|^\frac{n-p}{n+p}} = n |B^n_2|^ \frac{2p}{n+p},
$
which is asymptotically  equivalent to $\frac{c^\frac{np}{n+p}}{n^\frac{n(p-1)-p}{n+p}}$
  with an absolute constant $c$, the theorem shows that  
for a fixed $p$, $IS_p(K)$ is proportional to 
a power of $|K|$,  up to a constant  depending on $n$ only.
\par
\noindent
We use the L\"owner ellipsoid of $K$ (e.g., \cite{ArtsteinGiannopoulosMilmanBook, BGVV14} or the survey \cite{Henk}), to give 
asymptotic estimates on the size of  $os_p(K)$ and $OS_p(K)$, also in terms of powers of $|K|$,  in Theorem \ref{thm asymp2}. For instance, we show that for $-n < p \leq 0$,
\begin{align*}\label{}
 \frac{os_p(B^n_2)}{ |B^n_2|^\frac{n-p}{n+p}}\leq \  \frac{os_p(K)}{ |K|^\frac{n-p}{n+p}} \leq  \   n^{n \frac{n-p}{n+p}}\  \frac{os_p(B^n_2)}{ |B^n_2|^\frac{n-p}{n+p}}.
\end{align*}
Equality holds trivially in the left inequality if $p=0$. 
If $p \neq  0$, equality holds in the left inequality iff $K$ is a centered ellipsoid. 
\newline
If $K$ is centrally symmetric, $n^{n \frac{n-p}{n+p}}$  can be replaced by $n^{n \frac{n-p}{2(n+p)}}$.
\par
\noindent
We refer to Theorem \ref{thm asymp2} for the details.

\section{Background and definitions}
\noindent
Throughout the paper,  $c, C$ etc., denote  absolute constants that may change from line to line. 
The center of gravity $g(K)$ of $K$ is defined by
$$
g(K)= \frac{1}{|K|} \ \int_{K} x  \ dx.
$$
When the center of gravity of $K$ is at $0$, then,  for real  $p \neq -n$, the $L_p$-affine surface areas are defined as \cite{Lutwak96, MeyerWerner2000, SchuettWerner2004}
\begin{equation} \label{def:paffine}
as_{p}(K)=\int_{\partial K}\frac{\kappa(x)^{\frac{p}{n+p}}}
{\langle x,N(x)\rangle ^{\frac{n(p-1)}{n+p}}} d\mu(x), 
\end{equation}
where  $\kappa(x)$ is the (generalized) Gauss-Kronecker curvature at $x\in \partial K$,  $N(x)$ is the outer unit
normal vector at $x$ to $\partial K$, the boundary of $K$, and  $\langle
\cdot, \cdot \rangle$ is the standard inner product on $\R^n$
which induces the Euclidean norm $\|\cdot\|$.  When the center of gravity of $K$ is not at $0$, we shift $K$ so that it is.
The case $p=1$ is the classical affine surface area whose definition goes back to Blaschke \cite{Blaschke}.
\par
\noindent
We denote by  $\mathcal K_K$  the collection of all compact convex subsets of $K$  and by  
$\mathcal K^K$  the collection of all compact convex sets containing $K$.
\newline
For $-\infty \leq p \leq \infty$, $p \neq -n$, we then define the 
{\em inner and outer maximal affine surface areas}  by
\begin{equation}\label{def sup}
IS_p(K) =  \sup_{C \in \mathcal K_K}\big(\as_p(C)\big),           \ \   \   OS _p(K)= \sup_{C \in \mathcal K^K}\big(\as_p(C)\big),  
\end{equation}
and the {\em inner and outer mininal affine surface areas} by
\begin{eqnarray}\label{def inf}
{i s}_p(K)=  \inf_{C \in \mathcal K_K}\big(\as_p(C)\big),         \ \   \   {os}_p(K)=  \inf_{ C \in \mathcal K^K }\big(\as_p(C)\big).
\end{eqnarray}
We show in section  \ref{Relevant-p} that $is_p$ is identically equal to $0$ for all $p$ and all $K$ and that the only meaningful $p$-range for 
$IS_p$ is $[0,n]$, for $OS_p$ it  is $[n,\infty]$ and for $os_p$ it is $(-n,0]$.
\par
\noindent
By Blaschke's selection theorem,  $\mathcal K_K$   is compact with respect to the Hausdorff metric.   
Proposition \ref{prop continuous} below, proved in  \cite{Lutwak96}, shows that the functional $K\mapsto as_p(K)$ is  upper semicontinuous with respect to the Hausdorff metric, 
if 
$0 \leq p \leq \infty$, respectively lower semicontinuous if $-n < p \leq 0$. We show in Lemma \ref{lemma-extreme} that 
 the suprema in~\eqref{def sup}  are in fact maxima for the relevant $p$-ranges $0 \leq p \leq n$, respectively, $n \leq p \leq \infty$, 
\begin{align*}\label{def sup}
IS_p(K) = \as_p(K_0) \hskip 3mm  \text{and} \hskip 3mm OS_p(K) = \as_p(K_1)
\end{align*}
for some convex body $K_0 \subset K$, respectively $K \subset K_1  $, 
 and  that the second  infimum in~\eqref{def inf}  is in fact a minimum for $-n < p \leq 0$, 
\begin{equation*}
os_p(K) = \as_p(K_2), 
\end{equation*}
for some $K_2$ in $\mathcal K^K$.
\par
\noindent
It was shown  \cite{Lutwak96,  SchuettWerner2004} that  for all $p \neq -n$ and for all invertible linear transformations $T:\R^n \to \R^n$
\begin{equation}\label{pafteraffine}
\as_p(T(K)) = |\det(T)|^{\frac{n-p}{n+p}}\  \as_p(K).
\end{equation}
It then follows immediately from the definitions ~\eqref{def sup}  
and ~\eqref{def inf} that the same holds,  replacing $\as_p$ with $IS_p, OS_p,is_p$ and 
$os_p$.
\par
\noindent
For a general convex body $K$ in $\mathbb{R}^n$, a  
particularly useful way to define $as_1(K)$ is the following. For $u\in \R^n$ and $t \ge 0$, define the half-spaces
\begin{align*}
H^+(t,u) = \big\{x\in \R^n~\big|~ \langle x,u \rangle \ge t\big\}, \quad H^-(t,u) = \big\{x\in \R^n~\big|~ \langle x,u \rangle \le t\big\}.
\end{align*}
For a convex body $K\subseteq \R^n$ and $\delta >0$, the (convex) floating body $K_\delta$ was introduced independently by B\'ar\'any and Larman
\cite{BaranyLarman1988}  and Sch\"utt and Werner \cite{SchuettWerner1990}, 
\begin{equation}\label{FB}
K_{\delta} = \bigcap_{|H^+(t,u) \cap K | \le \delta |K|}H^{-}(t,u).
\end{equation}
It was shown in~\cite{SchuettWerner1990} that for any convex body $K$ in $\mathbb{R}^n$, 
\begin{align} \label{limitFB}
\as_1(K) = 2\left(\frac{|B_2^{n-1}|}{n+1}\right)^{\frac {2}{n+1}}\lim_{\delta \to 0}\frac{|K|-|K_{\delta}|}{\big(\delta |K|\big)^{\frac 2{n+1}}}.
\end{align}
Here, and in what follows, $B_2^n$ denotes the unit Euclidean ball in $\R^n$. 
\vskip 2mm
\noindent
Geometric descriptions in the sense of (\ref{FB}) and  (\ref{limitFB}) of $L_p$-affine surface area also exist. We refer to e.g.,  \cite{HuangSlomkaWerner, SchuettWerner2003, SchuettWerner2004, Werner2002, WernerYe2008}. 
\vskip 5mm
\section{Main results}
Our main results  give quantitative estimates for the   inner and outer extremal affine surface areas. 
We observe  first that for some $p$, the values for the extremal affine surface areas can  can be given 
explicitly and  the $p$-ranges  can be restricted accordingly in the quantitive estimates of Theorems \ref{thm asymp} and \ref{thm asymp2} below.
\par
\noindent
\subsection{The relevant $p$-ranges} \label{Relevant-p}
(i) {\bf The case $IS_p(K)$} \label{IS}
\par
\noindent
If $p=0$, then for all $K$, 
$$
IS_0(K) =  \sup_{K'\in \mathcal K_K}\big(\as_0(K')\big) = n \sup_{K'\in \mathcal K_K} |K'| = n|K|.  
$$
\par
\noindent
If $p=n$, then for all $K$, 
$$
IS_n(K) = n |B^n_2|.
$$
Indeed, on the one hand, we have  by (\ref{pafteraffine}), 
$$
IS_n(K) \geq   \sup_{\rho B^n_2 \in \mathcal K_K}\big(\as_n(\rho B^n_2)\big) =  \sup_{\rho B^n_2 \in \mathcal K_K}\big(\as_n(B^n_2)\big)  = n |B^n_2|.
$$
The equi-affine isoperimetric inequality  \cite{Lutwak96} says that $as_n(K) \leq as_n(B^n_2)$. Therefore, 
$$
IS_n(K) =    \sup_{K'\in \mathcal K_K}\big(\as_n(K')\big) \leq    \sup_{K'\in \mathcal K_K} \big(\as_n( B^n_2)\big) =  n |B^n_2|.
$$
\par
\noindent
If $n < p \leq \infty$, then $IS_p(K) = \infty$.
This holds as by (\ref{pafteraffine}), 
$$
IS_p(K) \geq   \sup_{\varepsilon B^n_2 \in \mathcal K_K}\big(\as_p(\varepsilon B^n_2)\big) =  \sup_{\varepsilon} \varepsilon^{n \frac{n-p}{n+p}} \  n |B^n_2| = \infty, 
$$
since $\frac{n-p}{n+p} <0$.
\par
\noindent
If $- n < p<0$, then for all $K$,  $IS_p(K) = \infty$.
Indeed,  we have for all polytopes $P$
$$
IS_p(K) \geq   \sup_{P \in \mathcal K_K}\big(\as_p(P)\big) = \sup_{P \in \mathcal K_K}
\int_{\partial P}\frac{\kappa(x)^{\frac{p}{n+p}}}
{\langle x,N(x)\rangle ^{\frac{n(p-1)}{n+p}}} d\mu(x) = \infty,
$$
since $\kappa(x)=0$ almost everywhere.
\par
\noindent
If $- \infty \leq  p< -n$,  then for all $K$,  $IS_p(K) = \infty$.
Indeed, as above,
$$
IS_p(K) \geq   \sup_{\varepsilon B^n_2 \in \mathcal K_K}\big(\as_p(\varepsilon B^n_2)\big) =  \sup_{\varepsilon} \varepsilon^{n \frac{n-p}{n+p}} \  n |B^n_2| = \infty, 
$$
since $\frac{n-p}{n+p} <0$.
\par
\noindent
{\bf Conclusion}. The relevant $p$-range for $IS_p$ is $p \in [0,n]$. \\
We  note also that for $p \in [0,n]$, 
\begin{equation}\label{IS-ball}
IS_p(B^n_2)=n |B^n_2| = as_p(B^n_2).
\end{equation}

\vskip 2mm
\noindent
(ii) {\bf The case $OS_p(K)$} \label{OS}.
\par
\noindent
If $p=n$, then for all $K$, $OS_n(K) = n |B^n_2|$. 
Similarly, to (i) above, 
$$
OS_n(K) \geq   \sup_{R B^n_2 \in \mathcal K^K}\big(\as_n(R B^n_2)\big) =  \sup_{R B^n_2 \in \mathcal K^K}\big(\as_n(B^n_2)\big)  = n |B^n_2|
$$
and again by the equi-affine isoperimetric inequality, 
$$
OS_n(K) =    \sup_{K'\in \mathcal K^K}\big(\as_n(K')\big) \leq    \sup_{K'\in \mathcal K^K} \big(\as_n( B^n_2)\big) =  n |B^n_2|.
$$
\par
\noindent
If $0 \leq p < n$, then, $OS_p(K) = \infty$.  This holds as 
$$
OS_p(K) \geq   \sup_{R B^n_2 \in \mathcal K^K}\big(\as_p(R B^n_2)\big) =  \sup_{R B^n_2 \in \mathcal K^K} R^{\frac{n-p}{n+p}}  n |B^n_2|,
$$
and $R$ can be made arbitrarily large.
\par
\noindent
If $- n < p < 0$, then, $OS_p(K) = \infty$.
\newline
This holds as we can again take polytopes $P$ that contain $K$.
\par
\noindent
If $- \infty \leq  p < -n$, then for all $K$,  $OS_p(K) = \infty$.
\newline
Let $C_\varepsilon$ be a rounded cube centered at $0$ containing $K$ and such that each vertex is rounded by replacing it by a 
Euclidean ball with radius $\varepsilon$. More specifically,
$C_{\varepsilon}$ is the convex hull of the $2^{n}$ Euclidean balls
$$
B_{2}^{n}(t\cdot \delta,\epsilon)
\hskip 20mm \delta=(\delta_{1},\dots,\delta_{n})
$$
where $\delta_{i}=\pm1$ for all $i=1,\dots,n$ and $t$ is sufficiently big so that the convex
hull contains $K$. The boundary of $C_{\varepsilon}$ contains all the $2^{n}$-tants of the 
boundary of $B_{2}^{n}$. Therefore, in order to estimate $as_p\left( C_\varepsilon \right)$
from below it suffices to restrict the integration over the boundary of $C_{\varepsilon}$ to those
$2^{n}$-tants of the 
boundary of $B_{2}^{n}$. The curvature there equals $\varepsilon^{-n+1}$, while
$$
\langle x,N(x)\rangle\leq 2t\cdot \sqrt{n}.
$$
Then
$$
OS_p(K) \geq   as_p\left( C_\varepsilon \right)  
\geq \frac{\varepsilon^{ \frac{n(n-1)}{n+p}}}{(2t\sqrt{n})^{n\frac{p-1}{n+p}}}  n |B^n_2|,
$$
which can be made arbitrarily large for $\varepsilon$ arbitrarily small.
\par
\noindent
{\bf Conclusion}. The relevant $p$-range for $OS_p$ is $p \in [n, \infty]$.\\
We  note also that for $p \in [n, \infty]$, 
\begin{equation}\label{OS-ball}
OS_p(B^n_2)=n |B^n_2| = as_p(B^n_2).
\end{equation}
\vskip 2mm
\noindent
(iii) {\bf The case $os_p(K)$} \label{os}.
\par
\noindent
If $p=0$, then for all $K$, 
$$
os_0(K) =  \inf_{K'\in \mathcal K^K}\big(\as_0(K')\big) = n \inf_{K'\in \mathcal K^K} |K'| = n|K|.
$$
\par
\noindent
If $0 < p \leq \infty$ or if  $-\infty  <p  < -n$, then for all $K$, $os_p(K)=0$. 
Indeed,  for polytopes $P \in \mathcal K^K$,  we have for those $p$-ranges
$$
is_p(K) \leq \inf_{P\in \mathcal K^K} as_p(P) =0.
$$
\par
\noindent
{\bf Conclusion}. The relevant $p$-range for $os_p$ is $p \in (-n, 0]$.\\
We  note also that for $p \in (-n, 0]$, 
\begin{equation}\label{os-ball}
os_p(B^n_2)=n |B^n_2| = as_p(B^n_2).
\end{equation}
\vskip 2mm
\noindent
(iv) 
{\bf The case $is_p(K)$}.
\par
\noindent
We have that $is_p(K)=0$  for  all $p$ and for all $K$.
\newline
If  $0 < p \leq \infty$ or if $-\infty \leq p < -n$ we get  for polytopes $P \in \mathcal K_K$, 
$$
is_p(K) \leq \inf_{P\in \mathcal K_K} as_p(P) =0.
$$
If $-n<  p \leq 0$, then for all $K$, 
$$
is_p(K)  \leq \inf_{\varepsilon  B^n_2 \in \mathcal K_K}\big(\as_p(\varepsilon  B^n_2)\big) = 
n |B^n_2| \inf_{\varepsilon} \varepsilon ^{n \frac{n-p}{n+p}}=0.
$$
\par
\noindent
{\bf Conclusion}. There is no interesting $p$-range for the inner minimal affine surface area $is_p$.

\subsection{Continuity, monotonicity and isoperimetricity}

\vskip 2mm
\noindent
It was proved by Lutwak \cite{Lutwak96} that for $p\geq 1$, $L_p$-affine surface area is  an upper semicontinuous functional with respect to the Hausdorff metric.
In fact, it follows from Lutwak's proof that the same holds for all $0 \leq p <1$ (aside from the case $p=0$, which is just volume and hence continuous).
For $-n < p \leq 0$, the functional is lower semicontinuous. 
\vskip 2mm 
\noindent
\begin{prop}\label{lemma continuous}\cite{Lutwak96}
Let 
$0 \leq p \leq \infty$. Then the functional $K \mapsto \as_p(K)$ is upper semicontinuous with respect to the Hausdorff metric on $\R^n$.
For $- n < p \leq 0$, the functional is lower semicontinuous. 
\end{prop}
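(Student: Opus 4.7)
The plan is to adapt Lutwak's \cite{Lutwak96} integral-representation argument, which realizes $\as_p$ as an extremum of a family of continuous functionals of the convex body, and to run it in both directions.

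For $0<p<\infty$, I would write
\[
\as_p(K)=\int_{S^{n-1}} f_K(u)^{n/(n+p)}\,h_K(u)^{-n(p-1)/(n+p)}\,du,
\]
where $f_K$ is the density of the absolutely continuous part of the classical surface area measure $S_K$ and $h_K$ is the support function. Since $n/(n+p)\in(0,1)$, Hölder's inequality with conjugate exponents $(n+p)/n$ and $(n+p)/p$ yields, for every continuous positive $\psi$ on $S^{n-1}$,
\[
\as_p(K)\le \left(\int_{S^{n-1}} \psi\, dS_p(K,\cdot)\right)^{\!n/(n+p)}\!\left(\int_{S^{n-1}} \psi^{-n/p}\,du\right)^{\!p/(n+p)},
\]
where $S_p(K,\cdot)=h_K^{1-p}S_K$ is the $L_p$-surface area measure. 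The first factor is continuous in $K$ with respect to the Hausdorff metric because $S_K$ is weakly continuous and $h_K$ is continuous and strictly positive (after centering $K$ so that $0\in\mathrm{int}(K)$); the second factor does not depend on $K$. Hence every member of this family is a continuous functional of $K$. Lutwak's analysis of the Hölder equality case, combined with Hausdorff approximation by smooth strictly convex bodies, then shows that the infimum of the family over $\psi$ is exactly $\as_p(K)$. An infimum of continuous functionals is upper semicontinuous, giving the statement for $0<p<\infty$. The endpoints $p=0$, where $\as_0=n|K|$ is continuous, and $p=\infty$, where $\as_\infty$ is proportional to $|K^\circ|$ and hence continuous after centering, are handled separately and are in fact continuous.

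For $-n<p<0$ the exponent $n/(n+p)$ exceeds $1$ and Hölder's inequality reverses. The identical manipulation then expresses $\as_p(K)$ as a supremum of the same family of continuous functionals, which is lower semicontinuous; the case $p=0$ is again continuous.

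The chief technical point is that Hölder's inequality compares the absolutely continuous density $f_K\,du$ on one side with the full $L_p$-surface area measure $S_p(K,\cdot)$ on the other. For $0<p<\infty$ the singular component of $S_K$ merely inflates the upper bound and is harmless. For $-n<p<0$ it sits on the unfavorable side of the inequality, and one must exploit Hausdorff density of smooth bodies --- on which $S_p$ is absolutely continuous --- together with weak continuity of $S_p$, to confirm that the supremum over $\psi$ truly recovers $\as_p(K)$ for every $K$. This extension of Lutwak's reasoning is implicit in \cite{Lutwak96} for $p\ge 1$ and carries over to $0\le p<1$ and, in the dual form, to $-n<p\le 0$.
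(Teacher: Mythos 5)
The paper does not supply a proof of Proposition~\ref{lemma continuous}; it attributes the $p\ge 1$ case to Lutwak~\cite{Lutwak96} and merely asserts the extensions to $0\le p<1$ and to $(-n,0]$. Your reconstruction of the extremal (H\"older) representation is indeed the route Lutwak takes for the upper semicontinuity: writing $\as_p(K)=\int_{S^{n-1}} f_K^{n/(n+p)}h_K^{n(1-p)/(n+p)}\,du$ and applying H\"older against a positive continuous $\psi$ gives $\as_p(K)\le\Phi_\psi(K):=\bigl(\int\psi\,dS_p(K,\cdot)\bigr)^{n/(n+p)}\bigl(\int\psi^{-n/p}\,du\bigr)^{p/(n+p)}$, with each $\Phi_\psi$ continuous in $K$ by weak continuity of the surface area measure, and upper semicontinuity follows once $\inf_\psi\Phi_\psi=\as_p$ is established. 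Up to the variational step (which Lutwak in fact proves directly for measures, without approximation by smooth bodies), this part of your proposal is sound.

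Two steps are, however, wrong or genuinely incomplete. First, the endpoint $p=\infty$: the identity $\as_\infty(K)=n|K^\circ|$ holds only for sufficiently regular $K$. For a polytope $P$ the Gauss--Kronecker curvature vanishes $\mu$-a.e.\ on $\partial P$, so $\as_\infty(P)=0$ while $|P^\circ|>0$; hence $\as_\infty$ is upper semicontinuous but \emph{not} continuous, and you cannot dispose of this case by declaring it ``proportional to $|K^\circ|$.'' Second, the lower semicontinuity for $-n<p<0$ has a real gap that your closing remark flags but does not close. Reverse H\"older gives $\as_p(K)\ge\bigl(\int f_Kh_K^{1-p}\psi\,du\bigr)^{n/(n+p)}\bigl(\int\psi^{-n/p}\,du\bigr)^{p/(n+p)}$, but replacing the absolutely continuous integral by $\int\psi\,dS_p(K,\cdot)$ --- the only version that is continuous in $K$ --- makes the first factor \emph{larger}, not smaller, since $n/(n+p)>0$. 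So $\as_p(K)\ge\Phi_\psi(K)$ is false in general, and $\as_p$ has not been exhibited as a supremum of continuous functionals. ``Density of smooth bodies plus weak continuity of $S_p$'' does not repair this by itself, because $f_K$ is not continuous in $K$ and no family of functionals that are simultaneously pointwise lower bounds for $\as_p$ and Hausdorff-continuous in $K$ has been produced. Finally, for $-n<p<0$ your starting representation over $S^{n-1}$ already fails whenever $\partial K$ contains a flat piece: there $\kappa=0$ on a set of positive surface measure, so the integrand in~\eqref{def:paffine} is $+\infty$ and $\as_p(K)=+\infty$, while $\int_{S^{n-1}} f_K^{n/(n+p)}h_K^{n(1-p)/(n+p)}\,du$ can be finite (it is $0$ for a polytope). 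A different argument is needed for the lower-semicontinuity half of the proposition.
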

\vskip 2mm
\noindent
For the proof of the next lemma, we use Proposition \ref{lemma continuous} and the $L_p$-affine isoperimetric inequalities which were proved by Lutwak \cite{Lutwak96} for $p>1$ and for all other $p$ by 
Werner and Ye \cite{ WernerYe2008}. The case $p=1$ is the classical case.
\par
\noindent
For $p \geq 0$, 
\begin{align}\label{affine iso1}
\frac{\as_p(K)}{\as_p(B_2^n)} \le \left(\frac{|K|}{|B_2^n|}\right)^{\frac{n-p}{n+p}},
\end{align}
and for $-n < p \leq 0$, 
\begin{align}\label{affine iso2}
\frac{\as_p(K)}{\as_p(B_2^n)} \ge \left(\frac{|K|}{|B_2^n|}\right)^{\frac{n-p}{n+p}}.
\end{align}
Equality  holds in both inequalities iff $K$ is an ellipsoid.
Equality holds trivially in both inequalities if $p=0$.
\vskip 2mm
\noindent
\begin{lem}\label{lemma-extreme}  Let $K$ be  a convex body in $\mathbb{R}^n$. 
\par
\noindent
(i) Let $0\leq   p \leq n$.  Then there exists a convex body $K_0 \subset K$ such that
$$ IS_p(K) =   \sup_{C \in \mathcal K_K}\big(\as_p(C)\big)=as_p(K_0).$$
\vskip 2mm
\noindent
(ii) Let $n <  p \leq \infty$.  Then there exists a convex body $K_0 \supset K$ such that
$$ OS_p(K) =   \sup_{C \in \mathcal K^K}\big(\as_p(C)\big)=as_p(K_0).$$
\vskip 2mm
\noindent
(iii) Let $-n <  p < 0$.  Then there exists a convex body $K_0 \supset K$ such that
$$ os_p(K) =   \inf_{C \in \mathcal K^K}\big(\as_p(C)\big)=as_p(K_0).$$
\end{lem}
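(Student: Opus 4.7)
The overall strategy in all three parts is the same: reduce the supremum or infimum to one over a Hausdorff-compact subfamily of $\mathcal{K}_K$ or $\mathcal{K}^K$, and apply the appropriate semicontinuity of $as_p$ from Proposition \ref{lemma continuous}. The only real obstacle is that $\mathcal K^K$ is not compact in the Hausdorff metric, and one must use the $L_p$-affine isoperimetric inequalities \eqref{affine iso1} and \eqref{affine iso2} to discard the parts of $\mathcal{K}^K$ where the extremum cannot occur.

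For part (i), the family $\mathcal{K}_K$ is already Hausdorff-compact by Blaschke's selection theorem, so, since $0\leq p\leq n$, the u.s.c.\ statement of Proposition \ref{lemma continuous} lets me extract a maximizer $K_0\in\mathcal K_K$. To see that $K_0$ is a genuine convex body, I would observe that $IS_p(K)>0$ because any ball $\rho B^n_2\subseteq K$ contributes $as_p(\rho B^n_2)=\rho^{n(n-p)/(n+p)}\,n|B^n_2|>0$; but lower-dimensional compact convex sets have $as_p=0$ (the integrand in \eqref{def:paffine} is carried by a measure-zero set), so $K_0$ must have nonempty interior.

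For part (ii), let $C_k\in\mathcal K^K$ be a maximizing sequence for $OS_p(K)$. First, $OS_p(K)>0$ since any ball $R B^n_2\supseteq K$ gives $as_p(R B^n_2)>0$, so for large $k$ one has $as_p(C_k)\geq OS_p(K)/2>0$. Since $p>n$ the exponent $\frac{n-p}{n+p}$ in \eqref{affine iso1} is strictly negative, so
\[
\left(\frac{|C_k|}{|B^n_2|}\right)^{\frac{n-p}{n+p}}\geq\frac{as_p(C_k)}{as_p(B^n_2)}\geq \frac{OS_p(K)}{2\,as_p(B^n_2)}
\]
yields a uniform upper bound $|C_k|\leq V$. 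I then invoke the elementary geometric fact that a convex set containing a fixed interior ball $B(x_0,r)\subseteq K$ and of volume $\leq V$ has diameter bounded by a constant depending only on $V,r,n$ (the convex hull of $B(x_0,r)$ and any point $y$ is a cone of volume $\gtrsim r^{n-1}\|y-x_0\|$). Hence $C_k\subseteq R'B^n_2$ uniformly. The family $\{C\in\mathcal K^K:C\subseteq R'B^n_2\}$ is closed inside the compact set $\mathcal K_{R'B^n_2}$ (the containment $C\supseteq K$ is preserved under Hausdorff limits), hence compact, and the u.s.c.\ of $as_p$ produces a maximizer $K_0$; again $as_p(K_0)=OS_p(K)>0$ forces $K_0$ to be a convex body.

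Part (iii) is the symmetric argument with l.s.c.\ in place of u.s.c., using \eqref{affine iso2} instead of \eqref{affine iso1}. For a minimizing sequence $C_k\in\mathcal K^K$, $os_p(K)\leq as_p(RB^n_2)<\infty$ for any ball $R B^n_2\supseteq K$, so $as_p(C_k)$ is bounded above. Because $-n<p<0$ the exponent $\frac{n-p}{n+p}$ in \eqref{affine iso2} is strictly positive, so the reverse isoperimetric inequality turns the upper bound on $as_p(C_k)$ into an upper bound on $|C_k|$; the same diameter estimate as in (ii) places the whole sequence in a fixed large ball. The l.s.c.\ half of Proposition \ref{lemma continuous} then delivers a minimizer $K_0\supseteq K$, and \eqref{affine iso2} applied to $K_0$ gives $as_p(K_0)\geq as_p(B^n_2)(|K|/|B^n_2|)^{(n-p)/(n+p)}>0$, so $K_0$ is nondegenerate. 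The main technical step throughout is the diameter bound in (ii) and (iii); once that is in hand, the isoperimetric inequality plus semicontinuity deliver the result mechanically.
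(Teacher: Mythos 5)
Your proof is correct and follows the paper's general scheme (reduce to a Hausdorff-compact subfamily, invoke Proposition~\ref{lemma continuous}, and rule out degeneracy), but your treatment of parts (ii) and (iii) is genuinely more economical than the paper's. The paper handles the non-compactness of $\mathcal{K}^K$ by a replacement argument: for $C\supseteq K$ of large volume it constructs, via F.~John's theorem, a rescaled affine image $\tilde C\supseteq K$ with $\tilde C\subseteq nRB_2^n$ and $as_p(\tilde C)\geq as_p(C)$ (resp.\ $\leq$ in case (iii)), and then applies the cone estimate only for the remaining small-volume bodies. You instead bound the volume of the extremizing sequence \emph{directly}: for (ii), $as_p(C_k)\geq OS_p(K)/2>0$ plus the isoperimetric inequality~\eqref{affine iso1} with negative exponent gives $|C_k|\leq V$, and for (iii), the upper bound $as_p(C_k)\leq as_p(RB_2^n)$ plus~\eqref{affine iso2} with positive exponent does the same; the cone/diameter estimate then confines the whole sequence to a fixed ball without any appeal to John's theorem. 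This is a cleaner route and arguably preferable. Two small remarks: in (ii) and (iii), once $K_0\supseteq K$ is obtained as a Hausdorff limit, $K_0$ is automatically a body (it contains the interior of $K$), so the final positivity arguments for nondegeneracy are superfluous; and in (i), your justification that degenerate convex sets have $as_p=0$ by appealing to the integrand being carried by a null set is slightly hand-wavy (the paper instead deduces $as_p(C_{k_i})\to 0$ from $|C_{k_i}|\to 0$ via~\eqref{affine iso1}, which sidesteps interpreting~\eqref{def:paffine} on a lower-dimensional set), but the conclusion is the same.
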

\begin{proof}
(i) When $p=0$, $IS_0(K) = n |K|$ and we take $K_0=K$ and when $p=n$,  $IS_n(K) = n |B^n_2|$ and we can take again $K_0=K$. Let now $0<p<n$.   By the $L_p$-affine isoperimetric inequality (\ref{affine iso1}),  $as_p(K)\leq  n \  |K|^{\frac{n-p}{n+p}} \  |B_2^n|^{\frac{2p}{n+p}}$,
and in particular, the supremum is finite. There is a sequence $(C_k)_{k \in \mathbb{N}}$ of convex bodies such that for all $k$, $C_k \subset K$ 
$$
as_p(C_k) + \frac{1}{k} \geq  \sup_{C \in \mathcal K_K}\big(\as_p(C)\big), 
$$
or
$$
\lim_{k \rightarrow \infty} as_p(C_k) =  \sup_{C \in \mathcal K_K}\big(\as_p(C)\big).
$$
By the Blaschke selection principle, see e.g., \cite{SchneiderBook}, there is a subsequence $(C_{k_i})_{i \in \mathbb{N}}$ that converges in 
Hausdorff distance to a convex set $K_0$. We claim that $K_0$ is a convex body in $\mathbb{R}^n$, i.e., $K_0$ has an interior point. Suppose not. Then 
$\lim_{i \rightarrow \infty} |C_{k_i}| =  |K_0| =0$. By the  $L_p$-affine isoperimetric inequality (\ref{affine iso1}),
$$
\lim_{i \rightarrow \infty} as_p(C_{k_i})  \leq \lim_{i \rightarrow \infty} n \  |C_{k_i}|^{\frac{n-p}{n+p}} \  |B_2^n|^{\frac{2p}{n+p}} =0.
$$
Therefore 
$$
0=\lim_{i \rightarrow \infty} as_p(C_{k_i}) = \sup_{C \in \mathcal K_K}\big(\as_p(C)\big) >0,
$$
which is a contradiction.
The last inequality holds since there is $\rho>0$ such that a ball with radius $\rho$ is contained in $K$. By the upper semi continuity of the $L_p$-affine surface area, 
$$
 \sup_{C \in \mathcal K_K}\big(\as_p(C)\big) = \limsup _{i \rightarrow \infty} as_p(C_{k_i}) \leq as_p(K_0)
 $$
 and thus $IS_p(K)=as_p(K_0)$.
 \vskip 2mm
 \noindent
 (ii) We can assume that $g(K) =0$. There is $R >0$ such that $K \subset RB^n_2$. 
 For all convex bodies $C$ such that  $C \supset K$ and $|C| \geq (n R)^n |B^n_2|$ there is a convex body $\tilde{C}$ such that
 $\tilde{C}\supset K$,  $\tilde{C} \subset n R B^n_2$ and $as_p(C) \leq as_p(\tilde{C})$. We now show the latter. There is an affine map $A$ 
 with determinant $1$ and $\rho >0$ such that $\rho B^n_2$ is the ellipsoid of maximal volume is contained in $A(C)$. Then by F. John's theorem 
 $$
 \rho B^n_2 \subset A(C) \subset n \rho B^n_2.
 $$
Therefore,
$
 (n R)^n |B^n_2| \leq |C| \leq  (n \rho)^n |B^n_2|$ and thus $R \leq \rho$. This yields
 $$
 K \subset R B^n_2 \subset \rho \  \frac{R}{\rho} \ B^n_2 \subset \frac{R}{\rho} A(C).
 $$
 We pick $\tilde{C} = \frac{R}{\rho} A(C)$. Then $as_p(\tilde{C}) = \left(\frac{R}{\rho}\right)^{n \frac{n-p}{n+p}}  as_p(C) \geq as_p(C)$,  as $\left(\frac{R}{\rho}\right)^{n \frac{n-p}{n+p}}\geq 1$,
 as $p>n$.
 If $C \supset K$ is such that $|C| \leq (n R)^n |B^n_2|$, we proceed as follows. As $K$ is a convex body, there is $r>0$ such that $rB^n_2 \subset K$ and thus $rB^n_2 \subset C$.
For every $x \in C$, let $x^\perp$ be the hyperplane through the origin and orthogonal to $x$. We consider the cone with base $x^\perp \cap rB^n_2$ and apex $x$. Let $h_x$ denote the height of the cone. Then we have for all $x\in C$ that 
$\frac{h_x}{n}  r^{n-1} |B^{n-1}_2| \leq  (n R)^n |B^n_2|$ and thus $C \subset \frac{ |B^n_2|}{|B^{n-1}_2| } \  \frac{n^{n+1} R^n}{r^{n-1}} \  B^n_2$.
 \par
 \noindent
Hence, altogether we can assume that the relevant (for the supremum) convex bodies $C \in \mathcal K^K$ are contained in $R_0 B^n_2$, where $R_0 = \max\left\{n R,  \frac{ |B^n_2|}{|B^{n-1}_2| } \  \frac{n^{n+1} R^n}{r^{n-1}}\right\}$.
We then proceed as  above. By the $L_p$-affine isoperimetric inequality (\ref{affine iso1}), we have for all relevant $C \in \mathcal K^K$ that  
$$
as_p(C)\leq  n \  |C|^{\frac{n-p}{n+p}} \  |B_2^n|^{\frac{2p}{n+p}} \leq n \  |K|^{\frac{n-p}{n+p}} \  |B_2^n|^{\frac{2p}{n+p}} ,
$$
and in particular the supremum is finite. Then, as above,  there is a sequence $(C_k)_{k \in \mathbb{N}}$ of convex bodies such that we have for all $k$ that $C_k \subset K$ and that
$$
as_p(C_k) + \frac{1}{k} \geq  \sup_{C \in \mathcal K^K}\big(\as_p(C)\big), 
$$
or
$$
\lim_{k \rightarrow \infty} as_p(C_k) =  \sup_{C \in \mathcal K^K}\big(\as_p(C)\big).
$$
By the Blaschke selection principle, see e.g., \cite{SchneiderBook}, there is a subsequence $(C_{k_i})_{i \in \mathbb{N}}$ that converges in 
Hausdorff distance to a convex set $K_0$. $K_0$ is a convex body as $ K \subset C_{k_i}$ for all $i$ and thus $K \subset K_0$. We conclude as in (i).
\vskip 2mm
 \noindent
 (iii) The proof is similar to (ii). We include it for completeness. We can again assume that $g(K) =0$ and that  there is $R >0$ such that $K \subset RB^n_2$. 
 As in (ii),  we claim that for all convex bodies $C$ such that  $C \supset K$ and $|C| \geq (n R)^n |B^n_2|$ there is a convex body $\tilde{C}$ such that
 $\tilde{C}\supset K$,  $\tilde{C} \subset n R B^n_2$ and $as_p(C) \geq as_p(\tilde{C})$. We now show this. There is an affine map $A$ 
 with determinant $1$ and $\rho >0$ such that $\rho B^n_2$ is the ellipsoid of maximal volume is contained in $A(C)$. Then by F. John's theorem 
 $$
 \rho B^n_2 \subset A(C) \subset n \rho B^n_2.
 $$
Therefore,
$
 (n R)^n |B^n_2| \leq |C| \leq  (n \rho)^n |B^n_2|$ and thus $R \leq \rho$. This yields
 $$
 K \subset R B^n_2 \subset \rho \  \frac{R}{\rho} \ B^n_2 \subset \frac{R}{\rho} A(C).
 $$
 We pick $\tilde{C} = \frac{R}{\rho} A(C)$. Then $as_p(\tilde{C}) = \left(\frac{R}{\rho}\right)^{n \frac{n-p}{n+p}}  as_p(C) \leq as_p(C)$,  as $\left(\frac{R}{\rho}\right)^{n \frac{n-p}{n+p}}\leq 1$,
 as $-n < p <0$.
 If $C \supset K$ is such that $|C| \leq (n R)^n |B^n_2|$, we proceed as follows. As $K$ is a convex body, there is $r>0$ such that $rB^n_2 \subset K$ and thus $rB^n_2 \subset C$.
For every $x \in C$, consider the cone with base $x^\perp \cap rB^n_2$ and apex $x$. Let $h_x$ denote the height of the cone. Then we have for all $x\in C$ that 
$\frac{h_x}{n}  r^{n-1} |B^{n-1}_2| \leq  (n R)^n |B^n_2|$ and thus $C \subset \frac{ |B^n_2|}{|B^{n-1}_2| } \  \frac{n^{n+1} R^n}{r^{n-1}} \  B^n_2$.
 \par
 \noindent
Hence, altogether we can assume that the relevant (for the infimum) convex bodies $C \in \mathcal K^K$ are contained in $R_0 B^n_2$, where $R_0 = \max\left\{n R,  \frac{ |B^n_2|}{|B^{n-1}_2| } \  \frac{n^{n+1} R^n}{r^{n-1}}\right\}$.
We then proceed as  above. By the $L_p$-affine isoperimetric inequality (\ref{affine iso2}), we have for all relevant $C \in \mathcal K^K$ that  
$$
as_p(C)\geq  n \  |C|^{\frac{n-p}{n+p}} \  |B_2^n|^{\frac{2p}{n+p}} \geq n \  |K|^{\frac{n-p}{n+p}} \  |B_2^n|^{\frac{2p}{n+p}} ,
$$
and in particular the infimum is finite. Then, as above,  there is a sequence $(C_k)_{k \in \mathbb{N}}$ of convex bodies such that  $C_k \subset K$ for all $k$ and such that 
$$
as_p(C_k)  \leq  \inf_{C \in \mathcal K^K}\big(\as_p(C)\big) + \frac{1}{k} , 
$$
for all $k$, and hence
$$
\lim_{k \rightarrow \infty} as_p(C_k) =  \inf_{C \in \mathcal K^K}\big(\as_p(C)\big).
$$
By the Blaschke selection principle, see e.g., \cite{SchneiderBook}, there is a subsequence $(C_{k_i})_{i \in \mathbb{N}}$ that converges in 
Hausdorff distance to a convex set $K_0$. $K_0$ is a convex body as $ K \subset C_{k_i}$ for all $i$ and thus $K \subset K_0$. Again, we conclude as in (i).
\end{proof}
\vskip 2mm
\noindent
It is natural to ask about the continuity properties of inner and outer maximal, respectively minimal,  affine surface areas in the  $p$-ranges that are not
already settled by the above considerations.
\par
\noindent
\vskip 2mm
\noindent
\begin{prop}\label{prop continuous}
Let the set of convex bodies in $\R^n$ be endowed with the  Hausdorff metric.
\par
\noindent
For $0 \leq p \leq n$,  the functional $K \mapsto IS_p(K)$ is continuous.
\par
\noindent
For $n\leq p \leq \infty$,  the functional $K \mapsto OS_p(K)$ is continuous.
\par
\noindent
For $-n < p \leq 0$,  the functional $K \mapsto os_p(K)$ is continuous.
\end{prop}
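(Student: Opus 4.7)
The strategy is to prove each continuity statement as the conjunction of upper and lower semicontinuity, using Proposition \ref{lemma continuous} (semicontinuity of $as_p$) for one direction and a homothety that fits the extremizer of the limit body (produced by Lemma \ref{lemma-extreme}) into the approximating bodies for the other. The endpoints $p=0,n$ are immediate from the explicit formulas in Section \ref{Relevant-p} (each of the three functionals is there either $n|K|$ or the constant $n|B_2^n|$), so I focus on the interior of each relevant $p$-range.

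Consider $IS_p$ with $0<p<n$ and fix $K_m\to K$ in Hausdorff metric. For upper semicontinuity, Lemma \ref{lemma-extreme}(i) provides extremizers $K_0^m\subset K_m$ with $as_p(K_0^m)=IS_p(K_m)$. The affine isoperimetric inequality \eqref{affine iso1} gives a uniform upper bound on $IS_p(K_m)$ and, upon rearrangement, a uniform lower bound on $|K_0^m|$ (using that $K_m$ eventually contains a fixed ball inside $K$, so $IS_p(K_m)$ is bounded below). Blaschke selection on a subsequence realising $\limsup IS_p(K_m)$ yields a limit $K_0^\ast$, which is $n$-dimensional from the volume bound and satisfies $K_0^\ast\subset K$ since $K_0^{m_j}\subset K_{m_j}\to K$. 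The upper semicontinuity of $as_p$ then gives
\[
\limsup_{m\to\infty} IS_p(K_m)=\limsup_{j\to\infty} as_p(K_0^{m_j})\le as_p(K_0^\ast)\le IS_p(K).
\]
For lower semicontinuity, take an extremizer $K_0\subset K$ and an interior point $g$ of $K_0$ with $g+\delta B_2^n\subset K_0$; the homothetic shrinking $K_0^\epsilon:=g+(1-\epsilon)(K_0-g)$ satisfies $K_0^\epsilon+\epsilon\delta B_2^n\subset K_0$ by a one-line convex-combination check, hence $K_0^\epsilon\subset\mathrm{int}(K_0)\subset\mathrm{int}(K)$ and $\mathrm{dist}(K_0^\epsilon,\partial K)>0$, so $K_0^\epsilon\subset K_m$ for all sufficiently large $m$. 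By \eqref{pafteraffine},
\[
IS_p(K_m)\ge as_p(K_0^\epsilon)=(1-\epsilon)^{n(n-p)/(n+p)}\,IS_p(K),
\]
and sending $m\to\infty$ followed by $\epsilon\to 0$ finishes this case.

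The cases $OS_p$ ($n<p\le\infty$) and $os_p$ ($-n<p<0$) follow the mirror-image template, with inclusions reversed and $(1-\epsilon)$ replaced by the enlargement $(1+\epsilon)$. The compactness step now requires an a priori upper bound on the extremizers $C_m\supset K_m$; the reductions inside the proof of Lemma \ref{lemma-extreme}(ii)--(iii) confine $C_m$ to a ball whose radius depends only on the inradius and circumradius of $K_m$, both of which stabilise as $K_m\to K$. For $os_p$ the compactness step invokes the \emph{lower} semicontinuity half of Proposition \ref{lemma continuous} and thereby yields lower semicontinuity of $os_p$; the enlargement $C_0^\epsilon:=g+(1+\epsilon)(C_0-g)$ contains $K$ strictly (hence $K_m$ eventually) and satisfies $as_p(C_0^\epsilon)=(1+\epsilon)^{n(n-p)/(n+p)}as_p(C_0)\to as_p(C_0)$, supplying the matching upper semicontinuity. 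The main obstacle throughout is keeping the Blaschke limits $n$-dimensional: for $IS_p$ this is exactly where the affine isoperimetric inequality is needed (to convert a uniform lower bound on $IS_p(K_m)$ into one on $|K_0^m|$), whereas for the outer problems the containment $K_m\subset C_m$ together with the uniform lower bound on the inradius of $K_m$ forces full dimension automatically.
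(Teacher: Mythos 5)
Your proof is correct, but it takes a genuinely different and considerably heavier route than the paper. The paper's proof never looks at extremizers at all: it observes that $IS_p$, $OS_p$ and $os_p$ are monotone under set inclusion simply because the families $\mathcal K_K$, $\mathcal K^K$ are, sandwiches $K_l \subseteq (1+\tfrac{\varepsilon}{\rho})K$ and $K \subseteq (1+\tfrac{10\varepsilon}{\rho})K_l$ using Hausdorff convergence and a ball $\rho B^n_2\subseteq K$, and then invokes only the homogeneity \eqref{pafteraffine} to squeeze $IS_p(K_l)$ between $(1+\tfrac{\varepsilon}{\rho})^{\pm n\frac{n-p}{n+p}}IS_p(K)$; the same two lines handle $OS_p$ and $os_p$. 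Your argument instead splits continuity into the two semicontinuities and, for the ``hard'' half, runs a compactness argument: extremizers from Lemma \ref{lemma-extreme} for the approximating bodies, uniform bounds (via \eqref{affine iso1} and the confinement step inside Lemma \ref{lemma-extreme}(ii)--(iii)), Blaschke selection, and the semicontinuity of $as_p$ from Proposition \ref{lemma continuous}; this is sound, including the nondegeneracy of the Blaschke limits and the choice of the correct semicontinuity half for $os_p$, but it imports machinery the statement does not need. Note also that your ``easy'' half is really half of the paper's sandwich in disguise: instead of shrinking (or inflating) the extremizer $K_0$ of $K$, you could shrink $K$ itself and use monotonicity, since $g+(1-\epsilon)(K-g)\subseteq K_m$ eventually gives $IS_p(K_m)\ge(1-\epsilon)^{n\frac{n-p}{n+p}}IS_p(K)$ directly, with no appeal to Lemma \ref{lemma-extreme}. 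What your route buys is robustness: it would still work for inclusion--nonmonotone functionals, and it isolates exactly which semicontinuity of $as_p$ drives each case; what the paper's route buys is brevity and independence from the existence of extremizers, the isoperimetric inequalities and Blaschke selection.
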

\vskip 2mm
\noindent
The next proposition lists  affine isoperimetric inequalities and  monotonicity properties for the  the functionals $IS_p$, $OS_p$ and $os_p$.
\par
\noindent
\begin{prop}\label{monotonicity}  Let $K$ be  a convex body in $\mathbb{R}^n$. 
\par
\noindent
(i) Let $0\leq   p \leq n$.  Then
$ IS_p(K)\leq IS_p(B^n_2) \  \left(\frac{|K|}{|B^n_2|} \right) ^\frac{n-p}{n+p}.$
\newline
Equality holds trivially if  $p=0$ or  $p=n$. 
\par
\noindent
Let $n \leq   p \leq \infty$.  Then
$ OS_p(K)\leq OS_p(B^n_2) \  \left(\frac{|K|}{|B^n_2|} \right) ^\frac{n-p}{n+p}.$
\newline
Equality holds trivially if   $p=n$. 
\par
\noindent
Let  $-n< p \leq 0$. Then 
$os_p(K)\geq os_p(B^n_2) \  \left(\frac{|K|}{|B^n_2|} \right) ^\frac{n-p}{n+p}.$
\newline
Equality holds trivially if   $p=0$. 
\par
\noindent
For all other $p$, equality holds in all  inequalities  iff $K$ is an ellipsoid. 
\vskip 2mm
\noindent
(ii)  $p \rightarrow \left(\frac{IS_p(K)}{ n |K|}\right)^\frac{n+p}{p}$ is strictly increasing in $p \in (0, n]$. 
\par
\noindent
$ p \rightarrow \left(\frac{OS_p(K)}{ n |K^\circ|}\right)^\frac{n+p}{p}$ is strictly decreasing in $p \in [n,  \infty)$.
\par
\noindent
$ p \rightarrow \left(\frac{os_p(K)}{ n |K^\circ|}\right)^\frac{n+p}{p}$ is strictly decreasing in $p \in (-n, 0)$.
\end{prop}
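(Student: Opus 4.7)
The plan for part (i) is to combine Lemma~\ref{lemma-extreme} (attainment of the extrema) with the $L_p$-affine isoperimetric inequalities~\eqref{affine iso1}--\eqref{affine iso2} and the identities~\eqref{IS-ball},~\eqref{OS-ball},~\eqref{os-ball}. For $IS_p$ with $0 \leq p \leq n$, pick $K_0 \subset K$ realizing the supremum; then~\eqref{affine iso1} gives $as_p(K_0) \leq n\,|B_2^n|^{2p/(n+p)}\,|K_0|^{(n-p)/(n+p)}$, and since $(n-p)/(n+p) \geq 0$ and $|K_0|\leq |K|$, the right side is at most $IS_p(B_2^n)\,(|K|/|B_2^n|)^{(n-p)/(n+p)}$. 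The $OS_p$ and $os_p$ cases are parallel, now with $K_0 \supset K$: for $OS_p$ the exponent is nonpositive on $[n,\infty]$ so that $|K_0|\geq |K|$ still preserves the direction, while for $os_p$ one uses the reversed inequality~\eqref{affine iso2} together with $(n-p)/(n+p)\geq 1$ on $(-n,0]$. In each case, equality when $p \neq 0, n$ forces $K_0$ to be an ellipsoid (equality case of the affine isoperimetric inequality) and $|K_0|=|K|$ (to saturate the volume comparison), hence $K_0=K$; the converse is immediate from the affine covariance~\eqref{pafteraffine} together with~\eqref{IS-ball}--\eqref{os-ball}.

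For part (ii), the main tool is a moment reformulation of $as_p$. With the cone measure $d\tau(x) = \langle x, N(x)\rangle\, d\mu(x)$ on $\partial C$ (of total mass $n|C|$) and the affine density $A(x) = \kappa(x)\,\langle x, N(x)\rangle^{-(n+1)}$, direct substitution in~\eqref{def:paffine} yields
\[
as_p(C) = \int_{\partial C} A(x)^{\,p/(n+p)}\, d\tau(x),
\]
so, with $t:=p/(n+p)$ and $\nu := \tau/(n|C|)$ a probability measure on $\partial C$,
\[
\Bigl(\tfrac{as_p(C)}{n|C|}\Bigr)^{(n+p)/p}
\;=\; \bigl(\mathbb{E}_\nu[A^{t}]\bigr)^{1/t}.
\]
The right side is the $L^t$-mean of $A$ on a probability space, which is non-decreasing in $t$ by the standard monotonicity of $L^t$-norms, and strictly increasing unless $A$ is $\nu$-a.e.\ constant --- which, by the equality case in~\eqref{affine iso1}--\eqref{affine iso2}, happens only when $C$ is an ellipsoid. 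Since $t=p/(n+p)$ is strictly increasing in $p$ on $(0,\infty)$ and on $(-n,0)$, the quantity $(as_p(C)/(n|C|))^{(n+p)/p}$ inherits this monotonicity in $p$ on each of these intervals.

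The transfer to $IS_p$ is then direct: for $0<p_1<p_2\leq n$, let $K_0$ realize $IS_{p_1}(K)$; applying the fixed-body monotonicity at $C=K_0$ and using $as_{p_2}(K_0)\leq IS_{p_2}(K)$ gives
\[
\bigl(IS_{p_1}(K)/(n|K_0|)\bigr)^{(n+p_1)/p_1}
\leq \bigl(IS_{p_2}(K)/(n|K_0|)\bigr)^{(n+p_2)/p_2}.
\]
Since $(n+p_1)/p_1 - (n+p_2)/p_2 = n(p_2-p_1)/(p_1 p_2) > 0$ and $|K_0|\leq |K|$, the volume $|K_0|$ in each denominator can be replaced by $|K|$ while preserving the inequality; strictness for $K$ not an ellipsoid follows from strictness in the moment inequality (unless $K_0$ is an ellipsoid) combined with part (i) (equality $|K_0|=|K|$ with $K_0$ an ellipsoid forces $K_0=K$). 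The $OS_p$ and $os_p$ cases follow the same blueprint with the extremal body now containing $K$; the appearance of $|K^\circ|$ in the normalization is most naturally understood through the polarity duality $OS_p(K) = IS_{n^2/p}(K^\circ)$ (and its analogue for $os_p$), which converts outer extremal problems on $K$ into inner extremal problems on $K^\circ$ and thereby transfers the monotonicity with the polar volume in place of $|K|$. The main bookkeeping obstacle is tracking how the signs of $(n+p)/p$ and the volume comparisons interact across the three $p$-ranges to produce the claimed \emph{increasing} direction for $IS_p$ and the \emph{decreasing} direction for $OS_p$ and $os_p$.
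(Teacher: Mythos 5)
Your part (i) is essentially the paper's own argument: apply the $L_p$-affine isoperimetric inequality \eqref{affine iso1}, resp.\ \eqref{affine iso2}, to every competitor $K'$ and then use monotonicity of volume under inclusion together with the sign of $\tfrac{n-p}{n+p}$; your detour through the extremal body of Lemma \ref{lemma-extreme} is harmless but not needed, and your equality discussion is consistent with the paper's. For the $IS_p$ half of (ii), your cone-measure reformulation $\as_p(C)=\int_{\partial C}A^{p/(n+p)}d\tau$ and the power-mean (Jensen) argument is a legitimate self-contained substitute for the monotonicity result the paper simply cites from \cite{WernerYe2008} (indeed it is essentially how that result is proved), and your transfer from the fixed body $K_0$ to $IS_p(K)$ --- replacing $|K_0|$ by $|K|$ using that $\tfrac{n+p_1}{p_1}-\tfrac{n+p_2}{p_2}>0$ --- is the same sup-exchange the paper performs. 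One caveat you share with the stated proposition: on a centered ellipsoid the normalized quantity is constant in $p$, so strict monotonicity can only hold off ellipsoids; your own strictness discussion implicitly records this.

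The genuine gap is in the $OS_p$ and $os_p$ parts of (ii). You reduce them to the claimed duality $OS_p(K)=IS_{n^2/p}(K^\circ)$, but the body-wise identity $\as_p(C)=\as_{n^2/p}(C^\circ)$ holds only for sufficiently smooth bodies (it is due to Hug for $C^2_+$), not in general: for a polytope $P$ and $-n<p<0$ one has $\as_p(P)=\infty$ while $\as_{n^2/p}(P^\circ)=0$. The ``analogue for $os_p$'' is untenable for a second reason: for $p\in(-n,0)$ one has $n^2/p<-n$, a range in which the inner infimum is identically zero by Section \ref{Relevant-p}, whereas part (i) shows $os_p(K)>0$; so no such transference can hold. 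Moreover, even where the duality is valid, it converts the exponent $\tfrac{n+q}{q}$ at $q=n^2/p$ into $\tfrac{n+p}{n}$, i.e.\ it would give monotonicity of $\bigl(OS_p(K)/(n|K^\circ|)\bigr)^{n+p}$ rather than of the quantity with exponent $\tfrac{n+p}{p}$ --- this is exactly the version the paper's displayed computation actually proves --- so the ``bookkeeping of signs'' you defer is precisely where your route fails to close. The paper instead invokes a second fixed-body monotonicity result from \cite{WernerYe2008}, that $p\mapsto\bigl(\as_p(C)/(n|C^\circ|)\bigr)^{n+p}$ is decreasing, and then exchanges the sup (resp.\ inf) using that $K'\supseteq K$ implies $|K'^\circ|\le|K^\circ|$. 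To complete your proof you would need either to cite that result or to prove a polar-volume analogue of your power-mean identity; your cone-measure computation only produces the normalization $n|C|$, not $n|C^\circ|$.
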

\vskip 3mm
\noindent
\subsection{Asymptotic estimates}

The next theorems provide estimates for the inner and outer extremal  affine surface areas in the  $p$-ranges that are not
already settled above. There,  $L_K$  is the  isotropic constant of $K$ as defined in \eqref{LK}.
\begin{thm}\label{thm asymp}
There is an absolute  constant $C>0$ such that for all $n \in \mathbb{N}$, all $0 \leq p \leq n$ and all convex bodies 
$K\subseteq \R^n$,
\begin{align}\label{asymp upper}
\frac{1}{n^{5/6}}  \  \left(\frac{C }{L_K}\right)^{\frac{2np}{n+p}}  \  \frac{IS_p(B^n_2)}{ |B^n_2|^\frac{n-p}{n+p}}\leq \  \frac{IS_p(K)}{ |K|^\frac{n-p}{n+p}} \leq  \   \frac{IS_p(B^n_2)}{ |B^n_2|^\frac{n-p}{n+p}}.
\end{align}
Equality holds trivially in the right inequality if $p=0, n$. 
If $p \neq 0, n$, equality holds in the right inequality iff $K$ is a centered ellipsoid.
\end{thm}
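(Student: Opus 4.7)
For the right inequality I apply the $L_p$-affine isoperimetric inequality (\ref{affine iso1}) to the maximizer $K_0\subseteq K$ furnished by Lemma \ref{lemma-extreme} (i):
\[
IS_p(K)=as_p(K_0)\le n\,|K_0|^{(n-p)/(n+p)}|B^n_2|^{2p/(n+p)}\le n\,|K|^{(n-p)/(n+p)}|B^n_2|^{2p/(n+p)},
\]
using $|K_0|\le|K|$. Since $IS_p(B^n_2)=n|B^n_2|$ throughout the range (see (\ref{IS-ball})), this is exactly the right-hand bound. For $p\ne 0,n$, equality in (\ref{affine iso1}) forces $K_0$ to be a centered ellipsoid and $|K_0|=|K|$ forces $K_0=K$, which gives the equality characterization.

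For the left inequality I first reduce to $K$ isotropic. By (\ref{pafteraffine}) the ratio $IS_p(K)/|K|^{(n-p)/(n+p)}$ is $GL(n)$-invariant; it is also translation invariant, and $L_K$ is an affine invariant. So I may assume $g(K)=0$, $|K|=1$, $\int_K xx^T\,dx = L_K^2 I_n$, and in particular $\int_K\|x\|^2\,dx = nL_K^2$. I then invoke the Guédon--Milman thin-shell estimate: there exist absolute constants $c_0,C_0>0$ such that for every $t\ge 0$,
\[
\bigl|\{x\in K:|\|x\|-\sqrt{n}L_K|\ge t\sqrt{n}L_K\}\bigr|\le C_0\exp\!\bigl(-c_0\sqrt{n}\min(t^3,t)\bigr).
\]
Choose $t=t_0/n^{1/6}$ with $t_0$ an absolute constant large enough that the right-hand side is $\le 1/2$, and write $R_0=(1-t)\sqrt{n}L_K$, $R_1=(1+t)\sqrt{n}L_K$, so that $\int_{R_0}^{R_1}\mathrm{area}\bigl(K\cap\partial(sB^n_2)\bigr)\,ds\ge 1/2$.

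Next I let $\alpha=2np/(n+p)$ and apply a weighted mean-value argument to $\int_{R_0}^{R_1}\mathrm{area}(K\cap\partial(sB^n_2))\,s^{1-\alpha}\,ds$ to locate an $R^*\in[R_0,R_1]$ with
\[
\mathrm{area}\bigl(K\cap\partial(R^* B^n_2)\bigr)\cdot(R^*)^{1-\alpha}\;\gtrsim\;\frac{(\sqrt{n}L_K)^{1-\alpha}}{L_K\,n^{1/3}},
\]
the implicit constant being absolute (it absorbs the factor $(1\pm t_0/n^{1/6})^{|1-\alpha|}$, which is close to $1$ except near $\alpha=n$). Setting $K'=K\cap R^*B^n_2\in\mathcal K_K$, the spherical portion $K\cap\partial(R^*B^n_2)$ of $\partial K'$ carries Gauss--Kronecker curvature $(R^*)^{-(n-1)}$. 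A Cauchy--Schwarz bound using $\int_K\|x\|^2\,dx=nL_K^2$ together with $|K'|$ being a definite fraction of $|K|$ shows $\|g(K')\|$ is bounded by an absolute constant times $R^*$, so after shifting $K'$ to centroid $0$, the inner product $\langle x,N(x)\rangle$ on the sphere stays comparable to $R^*$ up to absolute multiplicative factors. Since the integrand of $as_p$ in (\ref{def:paffine}) is nonnegative for $p\in[0,n]$, restricting the integral to this spherical portion yields
\[
as_p(K')\;\gtrsim\;\mathrm{area}\bigl(K\cap\partial(R^*B^n_2)\bigr)\cdot(R^*)^{-(n-1)p/(n+p)-n(p-1)/(n+p)}=\mathrm{area}\cdot(R^*)^{1-\alpha}.
\]

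Putting the estimates together, in isotropic position I obtain $IS_p(K)\gtrsim L_K^{-\alpha}\,n^{1/6-\alpha/2}$; using Stirling's $|B^n_2|^{1/n}\sim\sqrt{2\pi e/n}$ this matches the target $n^{1/6}(C/L_K)^\alpha|B^n_2|^{\alpha/n}\sim n^{1/6-\alpha/2}(C\sqrt{2\pi e}/L_K)^\alpha$ on the right-hand side of the theorem, provided the absolute constant $C$ is chosen small enough to absorb all implicit constants. Restoring the factor $|K|^{(n-p)/(n+p)}$ by affine invariance then completes the proof. The hardest point is controlling the factor $(R^*)^{1-\alpha}$ uniformly for $p\in[0,n]$: when $p$ is close to $n$ the exponent $1-\alpha\approx 1-n$ is very negative and, since $R^*$ is only pinned down to an interval of relative width $t_0/n^{1/6}$, the error $(1\pm t_0/n^{1/6})^{|1-\alpha|}$ can be as large as $e^{\Theta(n^{5/6})}$; the thin-shell scale $n^{-1/6}$ is precisely what produces the $n^{-5/6}$ factor in the statement.
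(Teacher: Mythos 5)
Your route is essentially the paper's: the upper bound via the $L_p$-affine isoperimetric inequality applied to subsets of $K$, and the lower bound via isotropic position, the Gu\'edon--Milman thin-shell estimate, the selection of a radius in the shell at which the spherical slice of $K$ is large, and the lower bound for $\as_p(K\cap R^*B^n_2)$ coming from the spherical part of the boundary, where $\kappa=(R^*)^{-(n-1)}$. The only difference is the selection device: you use the coarea formula and a weighted mean-value argument over $[R_0,R_1]$, whereas the paper cuts the shell into $\lesssim n^{5/6}$ annuli of relative width $2^{1/n}$, pigeonholes a heavy annulus, and converts its volume into spherical surface measure via the cone identity $\mu(RO)\,R/n=|S_O|$. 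These are equivalent mechanisms, and in both the $n^{-5/6}$ arises from the shell width $R_1-R_0\asymp L_K n^{1/3}$ (equivalently, the number of annuli).

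Two steps are not right as written. First, the implicit constant in your mean-value display is not absolute: with $\alpha=\frac{2np}{n+p}$ the endpoint correction is $(1\pm t)^{1-\alpha}$, which is only bounded below by roughly $2^{-\alpha-1}$, i.e.\ a constant to the power $\frac{2np}{n+p}$. That is exactly what the statement tolerates (absorb it into $C^{2np/(n+p)}$, just as the paper absorbs $R\le 2\sqrt{n}L_K$), but it is not ``close to $1$ except near $\alpha=n$'', and your closing paragraph misdiagnoses the issue: a genuine flat loss of $e^{\Theta(n^{5/6})}$ would destroy the bound, and the $n^{-5/6}$ in the statement comes from the factor $1/(R_1-R_0)$, not from $(1\pm t)^{|1-\alpha|}$. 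Second, the recentering step is a gap: since $g(K')\in K'\subseteq R^*B_2^n$, the bound $\|g(K')\|\le C R^*$ is vacuous and does not give $\langle x-g(K'),N(x)\rangle\asymp R^*$ on the spherical part. For $p\ge 1$ only the upper bound $\le 2R^*$ is needed and all is well, but for $0<p<1$ the integrand carries $\langle x,N(x)\rangle^{\frac{n(1-p)}{n+p}}$ in the numerator, so you need $\langle x-g(K'),N(x)\rangle\ge cR^*$, i.e.\ $\|g(K')\|\le(1-c)R^*$, which your Cauchy--Schwarz argument does not provide; moreover the claim that $|K'|$ is a definite fraction of $|K|$ is unjustified, since most of the thin-shell mass may sit at radii above $R^*$. (The paper sidesteps this by evaluating the boundary integral with $\langle x,N(x)\rangle=R$ at the isotropic origin, i.e.\ it does not recenter $K\cap RB^n_2$; if, as you do, one insists on the centroid normalization in (\ref{def:paffine}), this step requires an actual argument.)
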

\vskip 2mm
\noindent
By (\ref{IS-ball}), $ \frac{IS_p(B^n_2)}{ |B^n_2|^\frac{n-p}{n+p}} = n |B^n_2|^ \frac{2p}{n+p}$. Therefore, Theorem \ref{thm asymp} states that
\begin{align*}
\frac{1}{n^{5/6}}  \  \left(\frac{C }{L_K}\right)^{\frac{2np}{n+p}}  \  \leq \  \frac{IS_p(K)}{n \  |B^n_2|^\frac{2p}{n+p} \   |K|^\frac{n-p}{n+p}}\   \leq  \   1.
\end{align*}
Stirling's formula yields that with absolute constants, $c_1, c_2$, 
$$
 \frac{c_2^\frac{np}{n+p}}{n^\frac{n(p-1)-p}{n+p}} \leq \frac{IS_p(B^n_2)}{ |B^n_2|^\frac{n-p}{n+p}} = n |B^n_2|^ \frac{2p}{n+p} \leq  \frac{c_1^\frac{np}{n+p}}{n^\frac{n(p-1)-p}{n+p}}.
$$
\par
\noindent
As noted, the upper bound  is sharp when e.g.,  $K$ is $B_2^n$.
However, in general we have $IS_p(K) > \as_p(K)$. For example, for the $n$-dimensional cube $B^n_\infty$ centered at $0$ with sidelength $2$, 
$\as_p(B_\infty^n) = 0$, but $B_2^n \subseteq B_\infty^n$ and so $IS_p(B_\infty^n) \ge \as(B_2^n) > 0$.
\vskip 3mm
\begin{thm}\label{thm asymp2}
Let $K\subseteq \R^n$ be a convex body. 
\par
\noindent
(i) 
Let $n \leq p \leq \infty$.  Then there are absolute constants $c$  and $C$ such that 
\begin{align}\label{asymp upper}
\max \left\{ n^{-5/6}c^{n\frac{p-n}{p+n} }\left( \frac{C}{L_{(K-s(K))^\circ}}\right)^{\frac{2n^2}{n+p}}, n^{n \frac{n-p}{n+p}} \right\} \  \frac{OS_p(B^n_2)}{ |B^n_2|^\frac{n-p}{n+p}} \   \leq \  \frac{OS_p(K)}{ |K|^\frac{n-p}{n+p}} \leq  \   \frac{OS_p(B^n_2)}{ |B^n_2|^\frac{n-p}{n+p}},
\end{align}
where $s(K)$ is the Santal{\'o} point of $K$. Equality holds trivially in the right inequality if $p=n$. 
If $p \neq  n$, equality holds in the right inequality iff $K$ is a centered ellipsoid.
\par
\noindent
(i) 
Let $-n < p \leq0$.  Then 
\begin{align}\label{asymp upper}
 \frac{os_p(B^n_2)}{ |B^n_2|^\frac{n-p}{n+p}}\leq \  \frac{os_p(K)}{ |K|^\frac{n-p}{n+p}} \leq  \   n^{n \frac{n-p}{n+p}}\  \frac{os_p(B^n_2)}{ |B^n_2|^\frac{n-p}{n+p}}.
\end{align}
Equality holds trivially in the left inequality if $p=0$. 
If $p \neq  0$, equality holds in the left inequality iff $K$ is a centered ellipsoid.
\end{thm}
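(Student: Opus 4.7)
The plan is to split the four inequalities into two groups. The right inequality of (i) and the left inequality of (ii) are exactly the monotonicity estimates already proved in Proposition \ref{monotonicity}(i), with the equality cases coming from the equality cases of the $L_p$-affine isoperimetric inequality. Only the remaining two inequalities — the upper bound on $os_p(K)$ in (ii) and the lower bound on $OS_p(K)$ in (i) — require new arguments, and in part (i) the lower bound is expressed as the maximum of two independent estimates.

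The L\"owner ellipsoid of $K$ handles the upper bound on $os_p$ in (ii) and the ``$n^{n(n-p)/(n+p)}$'' term of the max in (i) simultaneously. Let $\mathcal E$ be this ellipsoid, so $K\subseteq \mathcal E$ and, after recentering, John's theorem gives $\mathcal E \subseteq nK$, hence $|\mathcal E|\leq n^n|K|$. Since $\mathcal E \in \mathcal{K}^K$, the affine-invariance identity (\ref{pafteraffine}) yields $\as_p(\mathcal E)=\as_p(B_2^n)(|\mathcal E|/|B_2^n|)^{(n-p)/(n+p)}$, and substituting $|\mathcal E|\leq n^n|K|$ produces the desired bound: for $p\in(-n,0]$ the exponent $(n-p)/(n+p)$ is nonnegative, which bounds $os_p(K)\leq \as_p(\mathcal E)$ from above by $n^{n(n-p)/(n+p)}\,os_p(B_2^n)(|K|/|B_2^n|)^{(n-p)/(n+p)}$; for $p\geq n$ the same exponent is nonpositive, the inequality flips, and $OS_p(K)\geq \as_p(\mathcal E)$ becomes the second entry of the max. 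The identifications $\as_p(B_2^n)=os_p(B_2^n)$ and $\as_p(B_2^n)=OS_p(B_2^n)$ on the respective ranges close the argument.

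The first entry of the max in (i) comes from polar duality combined with Theorem \ref{thm asymp}. Translate so the Santal\'o point of $K$ is at the origin and write $K^\circ$ for its polar. The classical polarity identity for $L_p$-affine surface area, $\as_p(L^\circ)=\as_{n^2/p}(L)$ for convex bodies $L$ with $0$ in their interior, together with the fact that $L\mapsto L^\circ$ is a bijection between $\mathcal{K}_{K^\circ}$ and $\mathcal{K}^K$, gives
\[
OS_p(K)\;\geq\;\sup_{L\in\mathcal{K}_{K^\circ}}\as_p(L^\circ)\;=\;\sup_{L\in\mathcal{K}_{K^\circ}}\as_{n^2/p}(L)\;=\;IS_{n^2/p}(K^\circ).
\]
Since $p\geq n$, the parameter $q:=n^2/p$ lies in $[0,n]$, so Theorem \ref{thm asymp} applies to $K^\circ$. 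Substituting $q=n^2/p$ converts its exponents into $2n^2/(n+p)$ on the isotropic-constant factor and $(p-n)/(p+n)$ on $|K^\circ|$. The Bourgain-Milman inequality $|K|\cdot|(K-s(K))^\circ|\geq c^n|B_2^n|^2$ turns the positive power of $|K^\circ|$ into the factor $c^{n(p-n)/(n+p)}|B_2^n|^{2(p-n)/(n+p)}|K|^{-(p-n)/(n+p)}$; after using $IS_{n^2/p}(B_2^n)=OS_p(B_2^n)=n|B_2^n|$, the remaining powers of $|B_2^n|$ collapse into the factor $OS_p(B_2^n)/|B_2^n|^{(n-p)/(n+p)}$, and the $|K|$ factor becomes $|K|^{(n-p)/(n+p)}$.

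I expect the most delicate step to be the polar-duality identity $\as_p(L^\circ)=\as_{n^2/p}(L)$: one must verify that it is compatible with the Santal\'o centering (so the bijection $L\mapsto L^\circ$ preserves the identity across the whole family) and handle the boundary cases $p=n$ and $p=\infty$. Once that identity is in place, the rest of the argument is routine bookkeeping of exponents combined with a single invocation of Bourgain-Milman.
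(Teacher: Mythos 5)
Your decomposition agrees with the paper for three of the four inequalities: the right inequality in (i) and the left inequality in (ii) are exactly Proposition \ref{monotonicity}(i) with its equality cases, and your L\"owner/John argument giving the upper bound on $os_p$ in (ii) and the $n^{n\frac{n-p}{n+p}}$ entry of the max in (i) is the paper's argument verbatim. Your exponent bookkeeping for the remaining bound is also consistent: with $q=n^2/p$ one indeed has $\frac{2nq}{n+q}=\frac{2n^2}{n+p}$ and $\frac{n-q}{n+q}=\frac{p-n}{p+n}$, and Bourgain--Milman enters exactly as in the paper.

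The gap is the step $OS_p(K)\ge IS_{n^2/p}(K^\circ)$, which rests on the identity $\as_p(L^\circ)=\as_{n^2/p}(L)$ for \emph{every} convex $L\subseteq K^\circ$. That identity is a theorem only for bodies that are $C^2$ with strictly positive Gauss curvature; for general convex bodies it is not established (even the one-sided inequality $\as_p(L^\circ)\ge\as_{n^2/p}(L)$ that your argument actually needs, applied for instance to the non-smooth maximizer realizing $IS_{n^2/p}(K^\circ)$, or to truncations such as $K^\circ\cap RB_2^n$), and nothing cited in the paper supplies it, so it cannot be treated as a classical black box. There are also secondary issues you mention but do not resolve: $L\mapsto L^\circ$ is not a bijection from $\mathcal K_{K^\circ}$ to $\mathcal K^K$ (bodies with $0\notin \mathrm{int}\,L$ have unbounded polars --- harmless, since only one containment is needed), and the paper's $\as_p$ is defined after recentering at the centroid while polarity is taken at the origin, so for a general $L$ the three reference points (origin, centroid of $L$, centroid of $L^\circ$) differ. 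The paper sidesteps all of this: it runs the thin-shell argument on $K^\circ$ in isotropic position to produce $R\in[\tfrac12\sqrt n\,L_{K^\circ},\,2\sqrt n\,L_{K^\circ}]$ with $\mu(RO)\ge \frac nR\frac1{Cn^{5/6}}$, and then bounds from below $\as_p$ of the explicit witness $\mathrm{conv}\{R^2K,\,RB_2^n\}$ --- which is precisely $R^2\,(K^\circ\cap RB_2^n)^\circ$ --- by integrating only over the spherical caps $R\theta$, $\theta\in O$, of its boundary; in other words, it proves by hand the single instance of the duality it needs. Your argument becomes correct if you replace the appeal to the general polarity identity by this direct computation (or by a proof of the needed inequality for the specific truncated bodies arising in Theorem \ref{thm asymp}).
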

\par
\noindent
If $p=n$, then the maximum in the lower bound of (i) is achieved for the second term and is $1$. If $p= \infty$, the maximum is achieved for the 
first term and it is equal to $n^{-5/6}c^{n}$. 
\newline
If $K$ is centrally symmetric, $n^{n \frac{n-p}{n+p}}$  can be replaced by $n^{n \frac{n-p}{2(n+p)}}$.

\subsection{Relation to quermassintegrals}
\vskip 2mm
\noindent
Finally, we turn to the relation of  the extremal affine surface areas  to quermassintegrals. While some of the (trivial) extremal affine surface areas are quermassintegrals,
we will see that in general this is not the case.
\par
\noindent
Given a convex body $K \subseteq \R^n$ and $t \ge 0$, the Steiner formula (see,  for example \cite{SchneiderBook}) says that there exist non-negative numbers $W_0(K),\dots, W_n(K)$, such that
\begin{align*}
|K+t \, B_2^n| = W_0(K)  + \binom{n}{1} W_1(K) t + \binom{n}{2}W_2(K) t^2 + \dots +W_n(K) t^n.
\end{align*}
The numbers $W_0(K), \dots, W_n(K)$ are called the quermassintegrals. In particular,  $W_0(K)=  |K|$ and $W_n(K) =  |B^n_2|$. 
Therefore,  by section \ref{Relevant-p}, $IS_0(K) = os_0(K) = n |K| = n W_0(K)$ and $IS_n(K) = OS_n(K) = n |B^n_2| = n W_n(K)$ are (multiples of) quermassintegrals. 
However, as shown in the next proposition, in general the extremal affine surface areas are  not (multiples, or powers of)  quermassintegrals. 
\par
\noindent
We only treat the cases $IS_1$, $os_{-1}$ and $OS_{n^2}$. The other  relevant $p$-cases  are treated similarly.
\vskip 2mm
\noindent
\begin{prop}\label{thm quer}
(i)  If $\beta >0$,  then  $IS_1^\beta$ and $os_{-1}$ are not equal to $W_i$, for any $0 \le i \le n$,  and if $\beta <0$,  then  $OS_{n^2} ^\beta$ is not equal to $W_i$, for any $0 \le i \le n$.
\vskip 2mm
\noindent
(ii) 
The quantities $IS_1$, $os_{-1}$ and $OS_{n^2}$ are not a linear combination of quermassintegrals.  In particular, those quantities are  not  valuations.
\end{prop}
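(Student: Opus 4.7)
The plan is to exploit two structural properties of the extremal affine surface areas that are not shared by the quermassintegrals: a typically non-integer homogeneity degree under dilations $K\mapsto \lambda K$, coming from (\ref{pafteraffine}), and equivariance under the full unimodular group $SL(n)$ with a non-trivial power of $|\det T|$. Combined with the equality case of the $L_p$-affine isoperimetric inequality in one delicate configuration, this should cover every case.

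For part (i), I would first treat $IS_1^\beta$ with $\beta>0$. By (\ref{pafteraffine}), $IS_1^\beta$ is homogeneous of degree $d(\beta):=\beta n(n-1)/(n+1)$, while $W_i$ has integer degree $n-i$; evaluating the putative identity on $\lambda K$ for a single $K$ with $IS_1(K)>0$ would force $d(\beta)=n-i$, i.e.\ the discrete value $\beta=(n-i)(n+1)/(n(n-1))$, and for every other $\beta$ the scaling obstruction alone suffices. When the scalings do match, I would split on $i$. The case $i=n$ is immediate since $W_n=|B_2^n|$ is constant while $IS_1$ is not. For $1\le i\le n-1$ I would pick any non-orthogonal $T\in SL(n)$ and set $K_1=B_2^n$, $K_2=TB_2^n$: (\ref{pafteraffine}) with $|\det T|=1$ gives $IS_1(K_1)=IS_1(K_2)$, while the classical Alexandrov--Fenchel consequence that a non-spherical ellipsoid has strictly larger $W_i$ than the ball of the same volume gives $W_i(K_1)<W_i(K_2)$, a contradiction. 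For $i=0$, both $W_0$ and $IS_1^\beta$ at the critical $\beta=(n+1)/(n-1)$ are $SL(n)$-equivariant in exactly the same way, so affine invariance is useless; here I would invoke the right-hand inequality of Theorem~\ref{thm asymp}, whose equality case says $IS_1(K)<IS_1(B_2^n)$ for any non-ellipsoidal $K$ with $|K|=|B_2^n|$, and this contradicts $IS_1^\beta=W_0$. The same scheme, applied to $os_{-1}^\beta$ ($\beta>0$) via the left-hand inequality of Theorem~\ref{thm asymp2}(ii), and to $OS_{n^2}^\beta$ ($\beta<0$) via the right-hand inequality of Theorem~\ref{thm asymp2}(i), settles the remaining two functionals; the signs of $\beta$ and of $(n-p)/(n+p)$ align so the $i=0$ contradiction reads identically in each case.

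For part (ii), a brief scaling computation should suffice. If $IS_1=\sum_{i=0}^n a_i W_i$, applying both sides to $\lambda K$ for any fixed $K$ with $IS_1(K)>0$ yields a single monomial in $\lambda$ of non-integer exponent $n(n-1)/(n+1)$ (never an integer for $n\ge 2$) on the left, against a polynomial in $\lambda$ with integer exponents on the right, a contradiction. The same argument handles $os_{-1}$, whose scaling degree $n(n+1)/(n-1)$ strictly exceeds $n$ for $n\ge 2$ and so lies outside the range of any $W_i$, and $OS_{n^2}$, whose scaling degree is strictly negative. For the ``not a valuation'' conclusion, each of these three functionals is continuous (Proposition~\ref{prop continuous}), translation invariant (by centering in (\ref{def:paffine})), and $SO(n)$-invariant (by (\ref{pafteraffine}) with orthogonal $T$), so Hadwiger's characterisation of continuous rigid-motion invariant valuations as linear combinations of quermassintegrals would immediately contradict what was just proved. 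The hard part of the whole argument is the $i=0$ case of part (i): there scaling and $SL(n)$-equivariance coincide for $W_0$ and the critical power of the extremal affine surface area, so one must feed in the \emph{uniqueness} in the $L_p$-affine isoperimetric inequality to break the tie.
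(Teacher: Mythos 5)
Your proposal is correct, but it diverges from the paper in part (i). For (ii) your degree-counting argument is essentially the paper's, just streamlined: the paper iteratively differentiates and evaluates at $\alpha=0$ to kill the coefficients $\lambda_n,\dots,\lambda_2$, while you observe outright that the left side is a single monomial of degree $n(n-1)/(n+1)$ (never an integer for $n\ge 2$, and for $os_{-1}$ resp.\ $OS_{n^2}$ of degree $>n$ resp.\ $<0$), so it cannot agree with a polynomial whose exponents lie in $\{0,\dots,n\}$; this is the same obstruction, reached faster.

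For part (i), however, your route is genuinely different from the paper's. After matching degrees, the paper simply evaluates at $K=B_2^n$: from $IS_1^\beta(B_2^n)=W_i(B_2^n)$ and $IS_1(B_2^n)=n|B_2^n|$, $W_i(B_2^n)=|B_2^n|$, one gets $|B_2^n|^{(1-\beta)/\beta}=n$ with $\beta\in\mathbb{Q}$, which would make $\pi$ algebraic -- a single-line contradiction, uniform in $i$. You instead split on $i$: dispose of $i=n$ by constancy of $W_n$, of $1\le i\le n-1$ by the $SL(n)$-invariance of $IS_1$ versus the strict quermassintegral inequality $W_i(E)>W_i(B_2^n)$ for a non-spherical ellipsoid $E$ of the same volume (an Alexandrov--Fenchel consequence), and of $i=0$ by the equality case of Theorem~3.4 (equivalently the $L_p$-affine isoperimetric inequality). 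Your approach is more geometric and illuminates \emph{why} the identity fails -- it detects that $IS_1$ is $SL(n)$-equivariant while $W_i$ for $1\le i\le n-1$ is not, and that $W_0$ has the wrong equality case -- at the cost of a three-way case split and a dependence on the equality characterization in the affine isoperimetric inequality. The paper's argument is shorter and avoids the isoperimetric machinery entirely, but at the price of invoking the transcendence of $\pi$, which is external to convex geometry. Both are valid; it is worth being explicit in the $i=0$ case that you are using the \emph{strict} inequality $IS_1(K)<IS_1(B_2^n)$ for non-ellipsoidal $K$ of the same volume, together with the fact that $IS_1(K)=|K|^{1/\beta}$ would force $IS_1(K)/|K|^{(n-1)/(n+1)}$ to be the constant $1$ for all $K$, contradicting that constant being attained only by ellipsoids.
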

\smallskip
\noindent
\begin{rmk}
From~\cite{Schuett1993} it is known that affine surface area is a valuation, that is, for every $K,L \subseteq \R^n$ convex, 
\begin{align*}
\as_1(K\cap L) + \as_1(K\cup L) = \as_1(K) + \as_1(L).
\end{align*}
It is also known by Hadwiger's characterization theorem~\cite{HadwigerBook}, that every continuous rigid motion invariant valuation on the set of convex bodies is a linear combination of quermassintegrals. 
Thus, Proposition~\ref{thm quer} (ii) shows in particular that $IS_1$, $os_{-1}$ and $OS_{n^2}$ are  not valuations.
\end{rmk}
\medskip
\section{Proofs}
\vskip 2mm
\noindent
\begin{proof}[Proof of Proposition~\ref{prop continuous}]
By section (\ref{Relevant-p}) (i), $IS_0(K) =n|K|$ is just volume, which is continuous and $IS_0(K) =n|B^n_2|$,  which is constant and hence continuous.
Thus for $IS_p(K)$ we only need to consider $p \in (0,n)$. 
We may assume  that $0$ is the center of gravity of $K$, that is,
\begin{align*}
\int_{K}x\, dx = 0.
\end{align*}
Hence, there exists $\rho >0$ such that $\rho B^n_2 \subseteq K$. 
Let $\{K_l\}_{l=1}^{\infty}$ be a sequence of convex bodies, all having center of gravity at the origin,  that converges to $K$ in the Hausdorff metric. That is, for 
every $\varepsilon >0$, there exists $l_0\in \mathbb N$ such that for all $l \geq l_0$, 
$$
K_l \subseteq K + \varepsilon B^n_2 \  \   \text{and}  \  \   K \subseteq K_l + \varepsilon B^n_2.
$$
If $\varepsilon>0$ is sufficiently small, then we can assume that for all $l \geq l_0$, $\frac{\rho}{10} B^n_2 \subseteq K_l$.
Thus, for all $l \geq l_0$,
\begin{equation}\label{contain1}
K_l \subseteq K + \varepsilon B^n_2  \subseteq K + \frac{\varepsilon}{\rho} K = \left( 1 +\frac{\varepsilon}{\rho} \right) K,
\end {equation}
and 
\begin{equation}\label{contain2}
K \subseteq K_l + \varepsilon B^n_2  \subseteq K_l + \frac{10 \varepsilon}{\rho} K_l = \left( 1 +\frac{10 \varepsilon}{\rho} \right) K_l.
\end {equation}
Hence,
\begin{eqnarray*}
\left(1+ \frac{\varepsilon}{\rho}\right)^{n\frac{n-p}{n+p}} IS_p(K) \  {=} \  
IS_p\left( \left(1+ \frac{\varepsilon}{\rho}\right) K\right) \stackrel{\eqref{contain1}}{\ge} IS_p(K_l), 
\end{eqnarray*}
and
\begin{eqnarray*}
\left(1+ \frac{10\varepsilon}{\rho}\right)^{n\frac{n-p}{n+p}} IS_p(K_l)  \  {=}  \  
IS_p\left( \left(1+ \frac{10\varepsilon}{\rho}\right) K_l\right) \stackrel{\eqref{contain2}}{\ge} IS_p(K).
\end{eqnarray*}
In the last two lines above, we have also used (\ref{pafteraffine}), resp. the remark after it.
Altogether, for all $l \ge l_0$,
\begin{align*}
\left(1+\frac\varepsilon \rho\right)^{-n\frac{n-p}{n+p}} IS_p(K_l) \le IS_p(K) \le \left(1+\frac{10\varepsilon}{\rho}\right)^{n\frac{n-p}{n+p}} IS_p(K_l).
\end{align*}
Since $\varepsilon>0$ is arbitrary, the result follows.
\par
\noindent
Continuity for outer maximal affine surface area $OS_p$ and outer minimal  surface area $os_p$ is treated similarly.
\end{proof}
\vskip 2mm
\noindent
For the proof of Proposition~\ref{monotonicity}  and Theorem ~\ref{thm asymp}, we use the above quoted $L_p$-affine isoperimetric inequalities.
\vskip 2mm
\noindent
\begin{proof}[Proof of Proposition~\ref{monotonicity}]
\par
\noindent
(i) When $0 < p \leq n$ and  $K' \subseteq K$, we use (\ref{affine iso1}) and  (\ref{IS-ball}),
\begin{eqnarray*} 
IS_p(K) &=&  \sup_{K'\in \mathcal K_K}\big(\as_p(K')\big) \leq  \sup_{K'\in \mathcal K_K}  \as_p(B_2^n)\left(\frac{|K'|}{|B_2^n|}\right)^{\frac{n-p}{n+p}}
\leq \as_p(B_2^n) \ \left(\frac{|K|}{|B_2^n|}\right)^{\frac{n-p}{n+p}} \\
&= &IS_p(B^n_2)  \ \left(\frac{|K|}{|B_2^n|}\right)^{\frac{n-p}{n+p}} .
\end{eqnarray*}
From the equality characterization of  (\ref{affine iso1}) it follows that equality holds iff $K$ is an ellipsoid.
\newline
Similarly, we get  for $OS_p$ when $p \in (n, \infty]$, also using (\ref{OS-ball}),
\begin{eqnarray*} 
OS_p(K) &=&  \sup_{K'\in \mathcal K^K}\big(\as_p(K')\big) \leq  \sup_{K'\in \mathcal K^K}  \as_p(B_2^n)\left(\frac{|K'|}{|B_2^n|}\right)^{\frac{n-p}{n+p}}
\leq \as_p(B_2^n) \ \left(\frac{|K|}{|B_2^n|}\right)^{\frac{n-p}{n+p}} \\
&= &OS_p(B^n_2)  \ \left(\frac{|K|}{|B_2^n|}\right)^{\frac{n-p}{n+p}}.
\end{eqnarray*}
From the equality characterization of  (\ref{affine iso1}) it follows that equality holds iff $K$ is an ellipsoid.
\newline
In the same way, using (\ref{affine iso2})  and (\ref{os-ball}) when $-n< p < 0$, we have 
\begin{eqnarray*} 
os_p(K) &=&  \inf_{K'\in \mathcal K^K}\big(\as_p(K')\big) \geq   \as_p(B_2^n)  \inf_{K'\in \mathcal K^K} \left(\frac{|K'|}{|B_2^n|}\right)^{\frac{n-p}{n+p}} \\
&\geq &
os_p(B^n_2)  \ \left(\frac{|K|}{|B_2^n|}\right)^{\frac{n-p}{n+p}}.
\end{eqnarray*}
Equality characterization follows from the equality characterization of (\ref{affine iso2}).
\vskip 2mm
\noindent
(ii)  It was shown in \cite{WernerYe2008} (see also \cite{PaourisWerner2012}) that   
the function $p \rightarrow \left(\frac{as_{p}(K)}{n |K|}\right)^{\frac{n+p}{p}}$ is  strictly increasing in $p\in (0, \infty)$.  
Therefore we get for $0 <p < q \leq n$, 
\begin{eqnarray*}
\left(\frac{IS_p(K)}{n |K|}\right)^{\frac{n+p}{p}} &=& \frac{\sup_{K'\in \mathcal K_K}\big(\as_p(K')^{\frac{n+p}{p}} \big) }{(n |K|)^{\frac{n+p}{p}} } < \frac{\sup_{K'\in \mathcal K_K}
\left(n |K'|\right) ^{\frac{n}{p}- \frac{n}{q}} \  \big(\as_q(K')^{\frac{n+q}{q}} \big) } {(n |K|)^{\frac{n+p}{p}} } \\
&\leq& \frac{\left(n |K|\right)^{\frac{n}{p}-\frac{n}{q}}}{(n |K|)^{\frac{n+p}{p}}} \  \left(\sup_{K'\in \mathcal K_K} \as_q(K') \right) ^{\frac{n+q}{q}}   = \left(\frac{IS_q(K)}{n |K|}\right)^{\frac{n+q}{q}}.
\end{eqnarray*}
It was also shown in \cite{WernerYe2008} (see also \cite{PaourisWerner2012}) that   
the function $p \rightarrow \left(\frac{as_{p}(K)}{n |K^\circ |}\right)^{n+p}$ is  strictly decreasing in $p\in (0, \infty)$. Therefore we get 
for $n \leq p < q < \infty$, 
\begin{eqnarray*}
\left(\frac{OS_p(K)}{n |K^\circ|}\right)^{n+p} &=& \frac{\sup_{K'\in \mathcal K^K}\big(\as_p(K')^{n+p} \big)}{(n |K^\circ|)^{n+p}} 
> \frac{\sup_{K'\in \mathcal K^K}
\left(n |K^{'\circ} | \right)^{p-q} \  \big(\as_q(K')^{n+q} \big)} {(n |K^\circ|)^{n+p} } \\
&\geq& \frac{ \left(\sup_{K'\in \mathcal K^K} \as_q(K') \right)^{n+q} }{\left(n |K^\circ|\right)^{n+q}}  = \left(\frac{OS_q(K)}{n |K^\circ|}\right)^{n+q}
\end{eqnarray*}
and
for $-n \leq p < q \leq 0$, 
\begin{eqnarray*}
\left(\frac{os_p(K)}{n |K^\circ|}\right)^{n+p} &=& \frac{\inf_{K'\in \mathcal K^K}\big(\as_p(K')^{n+p} \big)}{(n |K^\circ|)^{n+p}} 
> \frac{\inf_{K'\in \mathcal K^K}
\left(n |K^{'\circ} | \right)^{p-q} \  \big(\as_q(K')^{n+q} \big)} {(n |K^\circ|)^{n+p} } \\
&\geq& \frac{ \left(\inf_{K'\in \mathcal K^K} \as_q(K') \right)^{n+q} }{\left(n |K^\circ|\right)^{n+q}}  = \left(\frac{OS_q(K)}{n |K^\circ|}\right)^{n+q}
\end{eqnarray*}
\end{proof}
\vskip 2mm
\noindent
In part of the proof below it  is most convenient to work with a body which is in isotropic position. A body $K\subseteq \R^n$ is said to be in isotropic position if $|K| =1 $ and there exists $L_K > 0$ such that for all $\theta \in \S^{n-1}$,
\begin{align*}
\int_K\langle x,\theta\rangle dx = 0, \qquad \int_K \langle x,\theta \rangle ^2dx = L_K^2.
\end{align*}
Here and in what follows, $\S^{n-1}$ denotes the unit Euclidean sphere in $\R^n$. It is known that for every convex body $K\subseteq \R^n$, there exists $T:\R^n \to \R^n$ affine and invertible such that $TK$ is isotropic.  See for example~\cite{BGVV14} for this and other facts on isotropic position used here.
\vskip 2mm
\noindent
\begin{proof}[Proof of Theorem~\ref{thm asymp}] 
\vskip 2mm
\noindent
The upper bound, together with the equality characterizations,  follows immediately from Proposition \ref{monotonicity} (i).
\par
\noindent
Now we turn to  the lower bound in the case (i). 
As noted above, $IS_p(TK)=\det(T)^{\frac{n-p}{n+p}}IS_p(K)$ for any invertible linear map $T$. Therefore, 
to prove the lower bound  for $0 < p < \infty$,  it is sufficient to consider $K$ in isotropic position. 
Let 
$L_K$ be the isotropic constant of $K$. By the thin shell estimate of O. Gu\'edon and E.Milman \cite{GuedonMilman} (see also \cite{Fleury, LeeVempala17, Paouris}), we have with  universal constants $c$ and $C$, that for all $t \geq 0$, 
\begin{eqnarray*} 
\left| K \cap \left\{ x\in \mathbb{R}^n :\, \left| \|x\|-L_K\sqrt{n} \right| <  t L_K\sqrt{n} \right\}\right| >1-  C \text{exp}(-cn^{1/2}\text{min}(t^3,\, t)).
\end{eqnarray*}
Taking $t= O(n^{-1/6})$, there is a  a new universal constant $c>0$ such that for all $n \in \mathbb{N}$, 
\begin{equation}
\left| K \cap \left\{ x\in \mathbb{R}^n: \, \left| \|x\|-L_K\sqrt{n} \right| < c L_K n^{1/3} \right\}\right| \geq \frac{1}{2}.  \label{thin shell}
\end{equation}
This  set consists of all  $x \in K$  for which
 $$
 L_K \left( n^{1/2} - c n^{1/3}\right) <  \|x\|  <  L_K \left( n^{1/2} +  c n^{1/3}\right).
 $$ 
We consider those $n \in \mathbb{N}$ for which 
$n^{1/6} >c$.
\newline
We will truncate the above set. For $i=0,\,1,\,2,\,\dots k_n=  \lfloor{n \log_2 \frac {n^{1/2}+ c n^{1/3}}{n^{1/2}- c n^{1/3}}} \rfloor$, consider the sets 
$$
    L_i := K\cap \{x\in \mathbb{R}^n \, :\, 2^{i/n} (L_K (n^{1/2}-c n^{1/3}) )< \|x\| \le 2^{(i+1)/n}(L_K(n^{1/2}-cn^{1/3}))\}.
$$
Then
$$
2^{\frac{k_n}{n} } \leq 2^{\log_2 \frac {n^{1/2}+ c n^{1/3}}{n^{1/2}- c n^{1/3}}} = \frac {n^{1/2}+ c n^{1/3}}{n^{1/2}- c n^{1/3}}
$$
and thus
\begin{align}
    K \cap \left\{ x\in \mathbb{R}^n\,  :\left| \|x\|-L_K\sqrt{n} \right| < c L_K n^{1/3} \right\} \subset \cup_{i=0}^{k_n} L_i \label{truncation}.
\end{align}
Moreover, with a new absolute constant $C_0$, 
$$
k_n \leq n \log_2 \frac {n^{1/2}+ c n^{1/3}}{n^{1/2}- c n^{1/3}} = n  \log_2 \frac {1 + c n^{-1/6}}{ 1 - c n^{-1/6}} \leq  \ C_0\  n^{5/6}.
$$
By (\ref{thin shell}) and (\ref{truncation}), there exists $i_0 \in \{1,\,2,\,\dots,\, \lfloor {C_0 n^{5/6}}\rfloor \}$ such that 
\begin{equation}
|L_{i_0} |\ge \frac{1}{2 \   \lfloor {C_0 n^{5/6}}\rfloor}. \label{eq:Livolume}
\end{equation}
We set $R=2^{i_0/n}(L_K( n^{1/2}-c n^{1/3}))$. In particular, we have 
$$L_{i_0}=K\cap \{x\in \mathbb{R}^n \, :\, R < \|x\| \le 2^{1/n}R\}.$$ Let 
\[
O=\left\{ \theta\in S^{n-1}\,:\, \rho_{K}\left(\theta\right)>R \right\} ,\,\text{and}\,\,S_{O}=\left\{ r\theta\,:\,\theta\in O\,{\rm and}\,r\in\left[0,\,R\right]\right\} \subset K, 
\]
where $\rho_{K}\left(\theta\right)=\max\left\{ r\ge0\,:\,r\theta\in K\right\} $
is the radial function of $K$.
\par
\noindent
Now we claim that 
\begin{equation}
L_{i_0} \subset 2^{1/n}S_O.
\label{eq:inclusion}
\end{equation}
Indeed, let  $y\in L_{i_0}$. We express $y=r\theta$ in polar coordinates. By definition, we have $R<r<2^{1/n}R$ and $r\theta \in K$. Thus, $\rho_K(\theta)\ge r>R$ and hence $\theta\in O$. Therefore, $r\theta\in 2^{1/n}S_O$ because $r\in [0,\,2^{1/n}R]$.
By (\ref{eq:Livolume}) and (\ref{eq:inclusion}) we conclude that
\begin{equation}
\left|S_{O}\right|\ge\left(2^{-1/n}\right)^{n}|L_{i_0}| \ge \frac{1}{4 \   \lfloor {C_0 n^{5/6}}\rfloor}.\label{eq:volume}
\end{equation}
Now, we consider $as_{p}\left(K\cap RB_{2}^{n}\right)$.
For $\theta\in O$, $R \theta$ is a boundary
point of $K\cap R B_{2}^{n}$. Thus,
\begin{align*}
as_{p}\left(K\cap R B_{2}^{n}\right) & \ge\int_{RO}\frac{\kappa^{\frac{p}{n+p}}}{\iprod x{N\left(x\right)}^{\frac{n\left(p-1\right)}{n+p}}}d{\mu\left(x\right)}
  =\int_{RO}\frac{R^{-\left(n-1\right)\frac{p}{n+p}}}{R^{\frac{n\left(p-1\right)}{n+p}}}d{\mu\left(x\right)}\\
 & =\mu\left(RO\right)\left(\frac{1}{R}\right)^{\frac{\left(n-1\right)p+n\left(p-1\right)}{n+p}}
   =\mu\left(RO\right)\left(\frac{1}{R}\right)^{\frac{2np}{n+p}-1},
\end{align*}
where $\mu$ is the surface area measure of $RS^{n-1}$.
We can compare surface area and volume, 
\[
\frac{\mu\left(RO\right)\cdot R}{n}=\left|S_{O}\right|.
\]
Hence,
\begin{align*}
as_{p}\left(K\cap RB_{2}^{n}\right) & \ge\left(\frac{1}{R}\right)^{\frac{2np}{n+p}-1}\frac{n}{R}\left|S_{O}\right|=\left(\frac{1}{R}\right)^{\frac{2np}{n+p}}n\left|S_{O}\right|\\
 & \ge\left(\frac{1}{R}\right)^{\frac{2np}{n+p}} \frac{n}{4 \   \lfloor {C_0 n^{5/6}}\rfloor}.
\end{align*}
Since $R\le 2\sqrt{n}L_K$, this finishes the proof for the lower bound. 
\end{proof}

\vskip 3mm
\noindent
\begin{proof}[Proof of Theorem~\ref{thm asymp2}] 
The upper bound of (i) and the lower bound of (ii), together with the equality characterizations,  follow immediately from Proposition \ref{monotonicity} (i).
\par
\noindent
For the other estimates, we will rely on the dual body of $K$. Recall that the Santal{\'o} point $s(K)$ of a convex body $K$ is the unique point $s(K)$ for which 
 the origin is the barycenter  of $(K-s(K))^\circ$. Without loss of generality, we may assume $s(K)=0$
and $K^\circ$ is in isotropic position. 
\par
\noindent
Following the proof of Theorem~\ref{thm asymp}, there exists $R \in [\frac{1}{2} \sqrt{n}L_{K^\circ}, 2\sqrt{n}L_{K^\circ}]$ such that 
$$
O: = \{ \theta \in S^{n-1} \,:\, \rho_{K^\circ}(\theta)>R\}
$$
satisfies 
\[
\mu\left(RO\right) \ge \frac{n}{R} \frac{1}{Cn^{5/6}},
\]
where $\mu$ is the surface area measure of $RS^{n-1}$.
\par
\noindent
We consider the following convex hull $ {\rm conv}\{R^2K,\, RB_2^n\}$. 
Recall that the support function of a convex body $L$ is $h_L(\theta):= \max_{x\in L} \langle x, \theta \rangle$. Furthermore, we have the identity $h_L(\theta) = \frac{1}{\rho_{L^\circ}(\theta)}$. Thus, for $\theta \in O$
\[
	h_{R^2K}(\theta)=R^2h_K(\theta) = R^2 \frac{1}{\rho_{K^\circ}(\theta)} < R.  
\]
Therefore, $R\theta \in \partial ( {\rm conv}\{R^2K,\, RB_2^n\})$. 
Now, we have 
\[
OS_p(R^2K) \geq  as_p( {\rm conv}\{R^2K,\, RB_2^n\}) 
\geq \int_{RO}\frac{R^{-\left(n-1\right)\frac{p}{n+p}}}{R^{\frac{n\left(p-1\right)}{n+p}}}d{\mu\left(x\right)}
= \frac{n}{Cn^{5/6}}\left(\frac{1}{R}\right)^{\frac{2np}{n+p}}.
\]
Using the fact that $|K^\circ|=1$ and the volume product estimate $|L||L^\circ| \ge c^n    |B_2^n|^2$ from Theorem 1 of \cite{BM87}, we have
\[
	|R^2K|\ge c^n  R^{2n}|B_2^n|^2
\]
for some constant $c>0$.
\par
\noindent
Alltogether we conclude 
\[
	\frac{OS_p(R^2K)}{|R^2K|^{\frac{n-p}{n+p}}} \ge\frac{n}{Cn^{5/6}}\left(\frac{1}{R}\right)^{\frac{2np}{n+p}} (c^n R^{2n}|B_2^n|^2)^{\frac{p-n}{n+p}}
	= \frac{n}{Cn^{5/6}}\left(\frac{1}{R}\right)^{\frac{2n^2}{n+p}} (c^n |B_2^n|^2)^{\frac{p-n}{n+p}}.
\]
With the identity $
\frac{OS_p(B^n_2)}{ |B^n_2|^\frac{n-p}{n+p}} = n |B^n_2|^ \frac{2p}{n+p}$, we obtain 
\[
	\frac{OS_p(R^2K)}{|R^2K|^{\frac{n-p}{n+p}}} 
	\ge \frac{OS_p(B^n_2)}{ |B^n_2|^\frac{n-p}{n+p}}  \frac{1}{n^{5/6}}c^{n\frac{p-n}{p+n}} \left(\frac{C}{L_{K^\circ}}\right)^{\frac{2n^2}{n+p}}.
\]
\vskip 2mm
\noindent
Furthermore, we can derive a different bound using L\"owner position. We will assume that $K$ is in L\"owner position, i.e.,  the L\"owner ellipsoid $L(K)$, which is the ellipsoid of minimal volume containing $K$,
is the Euclidean ball $\frac{|L(K)|}{|B^n_2|} B^n_2$.  We also have that 
\begin{equation}\label{loewner}
K \subset L(K) \subset n \ K,
\end{equation}
and that for a $0$-symmetric convex body $K$,
\begin{equation}\label{loewner2}
K \subset L(K) \subset \sqrt{n}  \ K.
\end{equation}
\par
\noindent
(i) We get with  (\ref{pafteraffine}),  (\ref{OS-ball}) and (\ref{loewner}),
\begin{eqnarray*}
OS_p(K) \geq as_p(L(K)) = \left(\frac {|L(K)|}{|B^n_2|} \right) ^\frac{n-p}{n+p}  n \  |B^n_2|  \geq n^{n \frac {n-p}{n+p}} \left(\frac {|K|}{|B^n_2|} \right) ^\frac{n-p}{n+p} \  OS_p(B^n_2),
\end{eqnarray*}
which finishes the lower estimate of (i).
\vskip 2mm
\noindent
(ii) Similarly, we get with (\ref{pafteraffine}),  (\ref{OS-ball}) and (\ref{loewner}),
\begin{eqnarray*}
os_p(K) \leq as_p(L(K)) = \left(\frac {|L(K)|}{|B^n_2|} \right) ^\frac{n-p}{n+p}  n \  |B^n_2|  \leq n^{n \frac {n-p}{n+p}} \left(\frac {|K|}{|B^n_2|} \right) ^\frac{n-p}{n+p} \  os_p(B^n_2).
\end{eqnarray*}
In the $0$ -symmetric case we use (\ref{loewner2}) to get the estimate with $n^{n \frac {n-p}{2(n+p)}}$ instead of $n^{n \frac {n-p}{n+p}}$.
\end{proof}

\vskip 3mm
\noindent
\begin{proof}[Proof of Proposition~\ref{thm quer}]\
We only give the proofs for $IS_1$.  The proofs for $os_{-1}$ and $OS_{n^2}$ are the same with the obvious modifications.
\par
\noindent
(i) To prove the first assertion, note that by (the remark after) (\ref{pafteraffine}), $IS_1^\beta$ is homogeneous of degree $\frac{\beta n(n-1)}{n+1}$. Also, it is known that $W_i$ is homogeneous of degree $n-i$. Hence, if $IS_1^\beta = W_i$ for some $i$, then $\frac{\beta n(n-1)}{n+1} \in \mathbb N$ and in particular $\beta \in \mathbb Q$. On the other hand, it is known that $W_i(B_2) = |B_2^n|$. Thus, we must also have
\begin{align*}
 |B_2^n| = IS_p^\beta(B_2^n) \stackrel{(*)}{=}  n^{\beta}|B_2^n|^\beta,
\end{align*}
where in ($*$) we used  (\ref{IS-ball}). Therefore, we have $|B_2^n|^{\frac{1-\beta}{\beta}} \in \mathbb N$. Now, it is known that
\begin{align*}
|B_2^n| = \frac{\pi^\frac n 2}{\Gamma\left(\frac n 2 + 1\right)} = \begin{cases} \frac{\pi^\frac n 2}{(n/2)!} & 2 \mid n, \\ \frac{2((n-1)/2)!(4\pi)^\frac {n-1} 2}{n!} & 2 \nmid n. \end{cases}
\end{align*}
In other words, for every $n \in \mathbb N$, we have $|B_2^n| = Q_n \pi^\frac n 2$ or $|B_2^n| = Q_n \pi^\frac {n-1} 2$, where $Q_n \in \mathbb Q$. Therefore, if $|B_2^n|^{\frac{1-\beta}{\beta}} \in \mathbb N$ with $\beta \in \mathbb Q$, that would imply that $\pi$ is an algebraic number, which is not the case. This proves the first assertion.
\vskip 2mm
\noindent
(ii) Suppose that $IS_1$ is a linear combination of quermassintegrals. Then,  for $K$ given, there exist $\lambda_i$, $0 \leq i \leq n$, not all of them equal to $0$, such that
$IS_1(K) = \sum_{i=0}^n \lambda_i W_i(K)$. The respective homogeneity properties then imply that  for all $\alpha \in \mathbb{R}$,
$$
\alpha^{n \frac{n-1}{n+1}}IS_1(K) = \sum_{i=0}^n \lambda_i  \alpha^{n-i} W_i(K), 
$$
and in particular, for $K=B^n_2$, that for all $\alpha \in \mathbb{R}$, 
\begin{equation}\label{step1}
n\  \alpha^{n \frac{n-1}{n+1}} = \sum_{i=0}^n \lambda_i  \alpha^{n-i} = \lambda_0 \alpha^n + \lambda_1 \alpha^{n-1} + \cdots + \lambda_n.
\end{equation}
Letting $\alpha = 0$ in (\ref{step1}) shows that $\lambda_n=0$. This means that for all $\alpha \in \mathbb{R}$,
\begin{equation*}
n\  \alpha^{n \frac{n-1}{n+1}} = \sum_{i=0}^{n-1} \lambda_i  \alpha^{n-i} = \lambda_0 \alpha^n + \lambda_1 \alpha^{n-1} + \cdots + \lambda_{n-1} \alpha.
\end{equation*}
Differentiation gives
\begin{equation}\label{step2}
n \left(n \frac{n-1}{n+1}\right)  \alpha^{n \frac{n-1}{n+1}-1} = n  \lambda_0 \alpha^{n-1} + (n-1) \lambda_1 \alpha^{n-2} + \cdots + \lambda_{n-1}.
\end{equation}
Letting $\alpha = 0$ in (\ref{step2}) shows that $\lambda_{n-1}=0$.
We continue differentiating till  the largest $k \in \mathbb{N}$ for which the exponent  $n \frac{n-1}{n+1}-k $ of $\alpha$ on the left hand side of the equality is strictly larger than $0$.
We can take $k=n-2$ and  get that $\lambda_n=\lambda_{n-1}= \cdots =\lambda_2=0$. Thus  equality (\ref{step1}) reduces to the following: there exist $\lambda_0$ and $\lambda_1$
such that for all $\alpha \in \mathbb{R}$,
$$
\frac{n}{ \alpha ^\frac{n-1}{n+1}} = \lambda_0 \alpha + \lambda_1, 
$$
which is not possible. The proof is therefore complete.
\end{proof}
\noindent
\vskip 4mm
\noindent
{\bf Acknowledgement}:  \   We would like to thank the referee for all the helpful comments.

\vskip 4mm
\noindent
Ohad Giladi\\
{\small School of Mathematical and Physical Sciences}\\
{\small University of Newcastle}\\
{\small  Callaghan, NSW 2308, Australia}\\
{\small \tt ohad.giladi@newcastle.edu.au}
\vskip 3mm
\noindent
{\small Han Huang}\\
{\small Department of Mathematics}\\
{\small  Georgia Institute of Technology, Atlanta, Georgia}\\
{\small  \tt hhuang421@math.gatech.edu} 
\vskip 3mm
\noindent
Carsten Sch\"utt\\
{\small Mathematisches  Seminar}\\
{\small Universit\"at Kiel}\\
{\small Germany }\\
{\small \tt schuett@math.uni-kiel.de}
\vskip 3mm
\noindent
Elisabeth M. Werner\\
{\small Department of Mathematics \ \ \ \ \ \ \ \ \ \ \ \ \ \ \ \ \ \ \ Universit\'{e} de Lille 1}\\
{\small Case Western Reserve University \ \ \ \ \ \ \ \ \ \ \ \ \ UFR de Math\'{e}matique }\\
{\small Cleveland, Ohio 44106, U. S. A. \ \ \ \ \ \ \ \ \ \ \ \ \ \ \ 59655 Villeneuve d'Ascq, France}\\
{\small \tt elisabeth.werner@case.edu}\\ \\

\end{document}